\providecommand{\U}[1]{\protect\rule{.1in}{.1in}}
\newtheorem{theorem}{Theorem}[section]
\newtheorem{proposition}[theorem]{Proposition}
\newtheorem{corollary}[theorem]{Corollary}
\newtheorem{example}[theorem]{Example}
\newtheorem{examples}[theorem]{Examples}
\newtheorem{lemma}[theorem]{Lemma}
\newtheorem{final remark}[theorem]{Final Remark}
\newtheorem{definition}[theorem]{Definition}
\begin{document}

\title{\sc Generalized multiple summing multilinear operators on Banach spaces}
\author{Joilson Ribeiro\thanks{joilsonor@ufba.br} ~and Fabr\'icio Santos\thanks{Supported by CAPES doctoral scholarship.	
	\thinspace \hfill\newline\indent2010 Mathematics Subject
Classification: 46B45, 47L22, 40Bxx.\newline\indent Key words: Banach sequence spaces, ideals of multilinear operators, multiple summing operators.}}
\date{}
\maketitle

\begin{abstract} In this paper we provide an abstract aproach to the study of classes of multiple summing multilinear operators between Banach spaces. The main purpose is unify the study of several known classes and results, for example multiple $(p, q_1,\dots, q_n)$-summing operators, multiple mixing $(s, q, p)$-summing operators and multiple strong $(s, q, p)$-mixing summing operators. We also define new class of multiple summing multilinear operator that are particular cases of our construction and, therefore, satisfy the results proved in the paper.
\end{abstract}

\section{Introduction and background}


In the 1970's, Pietsch \cite{Pietsch67} introduced the abstract theory of operators ideals and in 1983 he presented in \cite{5} the concept of ideals of multilinear operators, whose adaptation to the case of homogeneous polynomials is immediate.


The notion of multiple summing multilinear operators was introduced, independently, in \cite{FDI04, 26}. This notion, which is based on the successful theory of absolutely summing operators, has been extensively studied recently. 
Several aspects of the theory of summing multilinear operators show
that the class of multiple summing multilinear operators is one of the most suitable and useful approaches to the nonlinear theory of absolutely summing
operators. Details can be found, e.g., in \cite{BPSS15, 19, 20,  38, 21, 34, 39, 26}.

  In \cite{S13} it was presented an abstract approach to absolutely summing operators, that generalizes some concepts of absolutely summing operators already studied in the literature. But the task of generalization is not easy. For example, this work has small gaps that were filled by Botelho and Campos in \cite{BC17}. Such generalizations deal with abstract classes of vector-valued sequences, abstract finitely determined sequence classes and abstract linearly stable sequence classes.
Up to the corrections pointed out in \cite{BC17}, it was proved in \cite{S13} that the abstract classes of summing absolutely multilinear operators are Banach ideals of multilinear operators.

The main goal of this paper is to construct an abstract approach to the classes of  multiple summing multilinear operators and to show that the resulting classes are Banach multi-ideals. Moreover, coherence and compatibility of these multi-ideals will be investigated. In the last section of the paper we will recover some well studied classes as particular instances of our abstract construction and new classes will be introduced as well. 

Our abstract construction is strongly based on the concept of sequence classes introduced in \cite{BC17}. However, to deal with multiple summing operators we have to extend this notion to what we call $n$-sequences classes 
in Section \ref{Sec2}. 
For the moment, let us recall the original definition from  \cite{BC17}:

\begin{definition}
A class of vector-valued sequences $\gamma_s$ is a rule that assigns to each Banach space $E$ a Banach space $\gamma_s(E)$  of $E$-valued sequences, that is
$\gamma_s(E)$ is a vector subspace of $E^{\mathbb{N}}$ with the coordinatewise operations, such that:
\begin{equation*}
c_{00}(E) \subset \gamma_s(E) \overset{1}{\hookrightarrow} \ell_{\infty}(E) \quad \text{and} \quad \|e_j\|_{\gamma_s(\mathbb{K})} = 1, \quad \forall j=1,\dots, n.
\end{equation*}
where $e_j$ is the vector with $1$ in the $j$-th coordinate and zero in the other coordinates, and
the symbol $E \overset{1}{\hookrightarrow} F$ means that $E $ is a linear subspace of $F$ and $\|x\|_F \le \|x\|_E$ .
\end{definition}

 Still according to  \cite{BC17}, the sequence class $\gamma_s$ is said to be:\\
$\bullet$ \textit{finitely determined} if for every $(x_j)_{j=1}^{\infty} \in E^{\mathbb{N}}$, it holds

\begin{equation*}
(x_j)_{j=1}^{\infty} \in \gamma_s(E) \iff \sup_k\left\|\left(x_j \right)_{j=1}^{k} \right\|_{\gamma_s(E)} < \infty,
\end{equation*}
and, in this case,

\begin{equation*}
\left\|\left(x_j \right)_{j=1}^{\infty} \right\|_{\gamma_s(E)} := \sup_k\left\|\left(x_j \right)_{j=1}^{k} \right\|_{\gamma_s(E)}.
\end{equation*}
$\bullet$ {\it linearly stable} if for every $u \in \mathcal{L}(E; F)$ it holds

\begin{equation*}
\left(u\left(x_j \right)\right)_{j=1}^{\infty} \in \gamma_s(F)
\end{equation*}
whenever $\left(x_j \right)_{j=1}^{\infty} \in \gamma_s(E)$ and $\|\hat{u} : \gamma_s(E) \rightarrow \gamma_s(F)\| = \|u\|$, where $\hat u$ is the linear operator induced, in the obvious way, by $u$.


Given sequence classes $\gamma_{s_1},\dots, \gamma_{s_n}, \gamma_s$, we write $\gamma_{s_1}(\mathbb{K})\cdots \gamma_{s_n}(\mathbb{K}) \overset{1}{\hookrightarrow} \gamma_s(\mathbb{K})$ if $\left(\lambda_j^{(1)}\cdots \lambda_j^{(n)} \right)_{j=1}^{\infty} \in \gamma_s(\mathbb{K})$ and

\begin{equation*}
\left\|\left(\lambda_j^{(1)}\cdots \lambda_j^{(n)} \right)_{j=1}^{\infty} \right\|_{\gamma_s(\mathbb{K})} \le \prod_{i=1}^{\infty}\left\|\left(\lambda_j^{(i)} \right)_{j=1}^{\infty} \right\|_{\gamma_{s_i}(\mathbb{K})}
\end{equation*}
whenever $\left(\lambda_j^{(i)} \right)_{j=1}^{\infty} \in \gamma_{s_i}(\mathbb{K}), i=1,\dots, n$.

The classical notion of ideal of multilinear operators (multi-ideal) is presented in the next definition.

\begin{definition}
Let $n \in \mathbb{N}$. A Banach ideal of $n$-linear operators is a pair $\left(\mathcal{M}_n, \|\cdot \|_{\mathcal{M}_n} \right)$ where $\mathcal{M}_n$ is as subclass of the class of all $n$-linear operators between Banach spaces and
\begin{equation*}
\|\cdot \|_{\mathcal{M}_n} : \mathcal{M}_n \longrightarrow \mathbb{R}
\end{equation*}
it is a function such that, for all Banach spaces $E_1,\dots, E_n, F$, the component

\begin{equation*}
\mathcal{M}_n(E_1,\dots, E_n; F) := \mathcal{L}(E_1,\dots, E_n; F) \cap \mathcal{M}_n
\end{equation*}
is a subspace of $\mathcal{L}(E_1,\dots, E_n; F)$ on which $\|\cdot \|_{\mathcal{M}_n}$ is a complete norm and:
\begin{enumerate}
\item The subspace of the $n$-linear operators of finite type is contained in $\mathcal{M}_n(E_1,\dots, E_n; F)$.

\item The operator $I_n : \mathbb{K}^n \rightarrow \mathbb{K}$, given by $I_n(\lambda_1,\dots, \lambda_n) = \lambda_1 \cdots \lambda_n$, belongs to $\mathcal{M}_n(\mathbb{K}^n; \mathbb{K})$ and $\|I_n\|_{\mathcal{M}_n} = 1$.

\item $($Multi-ideal property$)$ If $T \in \mathcal{M}_n(E_1,\dots, E_n; F), u_i \in \mathcal{L}(G_i; E_i), i=1,\dots, n$, and $t \in \mathcal{L}(F; H)$ then  $t\circ T \circ (u_1,\dots, u_n) \in \mathcal{M}_n(G_1,\dots, G_n; H)$ and

\begin{equation*}
\|t\circ T \circ (u_1,\dots, u_n) \|_{\mathcal{M}_n} \le \|t\| \left\|T \right\|_{\mathcal{M}_n} \|u_1\|\cdots \|u_n\|.
\end{equation*}
\end{enumerate}
\end{definition}

The notion of ideals of homogeneous polynomials can be defined in a similar way (see, e.g., \cite{13,30}).


\section{Multiple $\gamma_{s, s_1,\dots, s_n}$-summing operators}\label{Sec2}

The theory of multiple summing multilinear operators, which has been intensively studied, see, e.g. \cite{BPSS15, 19, 34, 39}, serves as a prototype of the general theory we introduce in this section. We begin presenting the notions of $n$-sequences and classes of vector-valued $n$-sequences.

\begin{definition} Given $n \in \mathbb{N}$, an $n$-sequence in a Banach space $E$ is a map $f : \mathbb{N}^n\rightarrow E$. Writing
\begin{equation*}
f(j_1,\dots, j_n) = x_{j_1,\dots, j_n}~{\rm for~all~} j_1, \ldots, j_n \in \mathbb{N},
\end{equation*}
the $n$-sequence $f$ can be denoted by $(x_{j_1,\dots, j_n})_{j_1,\dots, j_n=1}^{\infty}$.
\end{definition}

It is worth observing that, for $n \geq 2$, an $n$-sequence is not a sequence. For example, given a $2$-sequence $(x_{i, j})_{i, j = 1}^{\infty}$, it is useless to try to display it like a sequence in the following fashion:
\begin{equation*}
(x_{1, 1}, x_{2, 1}, x_{3, 1},\dots, x_{1, 2}, x_{2, 2}, x_{3, 2},\dots, x_{1, 3}, x_{2, 3}, x_{3, 3},\dots).
\end{equation*}
Of course this is not a sequence. Note also that an $n$-sequence can be transformed into a sequence in several ways. For example, consider the $2$-sequence $(x_{i, j})_{i, j \in \mathbb{N}}$ given by
\begin{equation*}
x_{i, j} = \left\{\begin{array}{rc}
1, &\text{if $i$ is odd and any $j \in \mathbb{N}$} \\
0, &\text{if $i$ is pair and any $j \in \mathbb{N}$}.
\end{array}\right.
\end{equation*}
It can be transformed into a sequence in many ways, for example
 \begin{align*}
 (1, 0, 1, 0,\dots) \text{\ \ \ \  and \ \ \ \  }  (0, 1, 0, 1,\dots).
 \end{align*}

Now we start the construction of our abstract framework. Throughout this paper, we will consider:
\begin{align*}
c_{00}\left(E; \mathbb{N}^n \right)
:= \left\{(x_{j_1,\dots, j_n})_{j_1,\dots, j_n =1}^{\infty}\text{; } x_{j_1,\dots, j_n} \neq 0 \text{ for only finitely many } j_1,\dots, j_n \right\}
\end{align*}
and
\begin{align*}
\ell_{\infty}\left(E; \mathbb{N}^n\right) := \left\{(x_{j_1,\dots, j_n})_{j_1,\dots, j_n =1}^{\infty} \text{; } \sup_{j_1,\dots, j_n \in \mathbb{N}}\|x_{j_1,\dots, j_n}\| < \infty \right\}.
\end{align*}

\begin{definition}{}
Let $n \in \mathbb{N}$. A class of vector-valued $n$-sequences $\gamma_s\left(\cdot, \mathbb{N}^n \right)$, or simply an $n$-sequence class $\gamma_s\left(\cdot, \mathbb{N}^n \right)$, is a rule that assigns to each Banach space $E$ a Banach space $\gamma_s(E; \mathbb{N}^n)$  of $E$-valued $n$-sequences, that is $\gamma_s(E; \mathbb{N}^n)$ is a complete linear subspace of the space of all $E$-valued $n$-sequences with the coordinatewise operations, such that:
\begin{equation*}
c_{00}\left(E; \mathbb{N}^n\right) \subset \gamma_s\left(E; \mathbb{N}^n\right) \overset{1}{\hookrightarrow} \ell_{\infty}\left(E; \mathbb{N}^n\right) \quad \text{and} \quad \|e_{k_1,\dots, k_n}\|_{\gamma_s\left(\mathbb{K}; \mathbb{N}^n\right)} = 1,
\end{equation*}
for all $k_1,\dots, k_n \in \mathbb{N}$, where $e_{k_1,\dots, k_n}=\left(x_{j_1,\dots,j_n}\right)_{j_1,\dots,j_n=1}^{\infty}$ is the $n$-sequence defined by:
	\begin{equation*}
	x_{j_1,\dots,j_n} = \left\{\begin{array}{rc}
	1, &\text{if} \quad j_1=k_1,\dots, j_n=k_n\\
	0, &\text{otherwise}.
	\end{array}\right.
	\end{equation*}
\end{definition}

An $n$-sequence class $\gamma_s\left(\cdot; \mathbb{N}^n \right)$ is \textit{finitely determined} if for every $E$-valued $n$-sequence $(x_{j_1,\dots, j_n})_{j_1,\dots, j_n = 1}^{\infty}$,
\begin{equation*}
(x_{j_1,\dots, j_n})_{j_1,\dots, j_n = 1}^{\infty} \in \gamma_s\left(E; \mathbb{N}^n \right) \iff \sup_{m_1,\dots, m_n\in \mathbb{N}}\left\|\left(x_{j_1,\dots, j_n} \right)_{j_1,\dots, j_n=1}^{m_1,\dots, m_n} \right\|_{\gamma_s\left(E; \mathbb{N}^n \right)}<\infty
\end{equation*}
and, in this case,
\begin{equation*}
\left\|(x_{j_1,\dots, j_n})_{j_1,\dots, j_n = 1}^{\infty} \right\|_{\gamma_s\left(E; \mathbb{N}^n \right)} = \sup_{m_1,\dots, m_n \in \mathbb{N}}\left\|\left(x_{j_1,\dots, j_n} \right)_{j_1,\dots, j_n=1}^{m_1,\dots, m_n} \right\|_{\gamma_s\left(E; \mathbb{N}^n \right)}.
\end{equation*}

Note that the concept of $n$-sequence class generalizes the concept of sequence class introduced in the literature by Botelho and Campos in \cite{BC17}.

Next we give some examples of finitely determined $n$-sequence classes.
\begin{examples}\label{ExSc}\
\begin{description}
    \item $(a)$ The correspondence $E \mapsto \ell_{\infty}\left(E; \mathbb{N}^n \right)$, endowed with the norm
        $$\|(x_{j_1,\dots, j_n})_{j_1,\dots, j_n = 1}^{\infty}\|_{\infty} := \displaystyle\sup_{j_1,\dots, j_n \in \mathbb{N}}\|x_{j_1,\dots, j_n}\|.$$
    \item $(b)$ The correspondence $E \mapsto \ell_p^w\left(E; \mathbb{N}^n \right)$, $1 \leq p < \infty$, where
    \begin{equation*}
	\ell_p^w\left(E; \mathbb{N}^n \right) := \left\{(x_{j_1,\dots, j_n})_{j_1,\dots, j_n = 1}^{\infty}\text{; }\sum_{j_1,\dots,j_n}|\varphi\left(x_{j_1,\dots, j_n}\right)|^p < \infty\text{, } \forall \varphi \in E' \right\},
	\end{equation*}
	endowed with the norm $\|(x_{j_1,\dots, j_n})_{j_1,\dots, j_n = 1}^{\infty}\|_{w, p} := \displaystyle\sup_{\varphi \in B_{E'}} \left(\displaystyle\sum_{j_1,\dots,j_n}|\varphi\left(x_{j_1,\dots, j_n}\right)|^p \right)^{\frac{1}{p}}$.
	\item $(c)$ The correspondence $E \mapsto \ell_p\left(E; \mathbb{N}^n \right)$, $1 \leq p < \infty$, where
	\begin{equation*}
	\ell_p\left(E; \mathbb{N}^n \right) := \left\{(x_{j_1,\dots, j_n})_{j_1,\dots, j_n = 1}^{\infty}\text{; }\sum_{j_1,\dots,j_n}\|x_{j_1,\dots, j_n}\|^p < \infty \right\},
	\end{equation*}
endowed with the norm $\|(x_{j_1,\dots, j_n})_{j_1,\dots, j_n = 1}^{\infty}\|_{p} := \left(\displaystyle\sum_{j_1,\dots,j_n}\|x_{j_1,\dots, j_n}\|^p \right)^{\frac{1}{p}}$.
	\item $(d)$ The correspondence $E \mapsto \ell_p\left<E; \mathbb{N}^n \right>$, $1 \le p < \infty$, where
	\begin{align*}
	&\ell_p\left<E; \mathbb{N}^n \right>\\
	:=& \left\{(x_{j_1,\dots, j_n})_{j_1,\dots, j_n = 1}^{\infty}\text{;}\sum_{j_1,\dots, j_n}\left|\varphi_{j_1,\dots, j_n}\left(x_{j_1,\dots, j_n}\right)\right| < \infty,\text{ } \forall (\varphi_{j_1,\dots, j_n})_{j_1,\dots, j_n \in \mathbb{N}} \in \ell_{p'}^w\left(E'; \mathbb{N}^n \right) \right\},
	\end{align*}
endowed with the norm $$\|(x_{j_1,\dots, j_n})_{j_1,\dots, j_n = 1}^{\infty}\|_{Coh, p} := \displaystyle\sup_{(\varphi_{j_1,\dots, j_n})_{j_1,\dots, j_n=1}^{\infty} \in B_{\ell_p^w\left(E'; \mathbb{N}^n\right)}}\displaystyle\sum_{j_1,\dots, j_n}\left|\varphi_{j_1,\dots, j_n}\left(x_{j_1,\dots, j_n}\right)\right|.$$
	\item $(e)$ The correspondence $E \mapsto \ell_p^{mid}\left(E; \mathbb{N}^n \right)$, $1 \leq p < \infty$, where
	\begin{align*}
	&\ell_p^{mid}\left(E; \mathbb{N}^n \right)
	:= \left\{(x_{j_1,\dots, j_n})_{j_1,\dots, j_n = 1}^{\infty}\text{; }\sum_{n}\sum_{j_1,\dots, j_n}|\varphi_n(x_{j_1,\dots, j_n})|^p < \infty, \text{ } \forall (\varphi_n)_{n=1}^{\infty} \in \ell_p^w(E') \right\},
	\end{align*}
endowed with the norm $\|(x_{j_1,\dots, j_n})_{j_1,\dots, j_n = 1}^{\infty}\|_{mid, p} := \displaystyle\sup_{(\varphi_n)_{n=1}^{\infty} \in B_{\ell_p^w(E')}}\left(\displaystyle\sum_{n}\displaystyle\sum_{j_1,\dots, j_n}|\varphi_n(x_{j_1,\dots, j_n})|^p\right)^{\frac{1}{p}}$.
    \item $(f)$ The correspondence $E \mapsto \ell_{m(s, p)}\left(E; \mathbb{N}^n \right)$, $1 \le p < \infty$, where $\ell_{m(s, p)}\left(E; \mathbb{N}^n \right)$ is the set of all $E$-valued $n$-sequences $(x_{j_1,\dots, j_n})_{j_1,\dots, j_n=1}^{\infty}$ such that $x_{j_1,\dots, j_n} = \tau_{j_1,\dots, j_n}x_{j_1,\dots, j_n}^0$ for some $(\tau_{j_1,\dots, j_n})_{j_1,\dots, j_n=1}^{\infty} \in  \ell_{s(q)'}\left(\mathbb{K}; \mathbb{N}^n \right), (x_{j_1,\dots, j_n}^0)_{j_1,\dots, j_n=1}^{\infty} \in \ell_p^w\left(E; \mathbb{N}^n \right)$ and $$\frac{1}{s(p)'} + \frac{1}{s} = \frac{1}{p}.$$
    Consider $\ell_{m(s, q)}\left(E; \mathbb{N}^n \right)$ endowed with the norm
\begin{equation*}
\left\|\left(x_{j_1,\dots, j_n} \right)_{j_1,\dots, j_n=1}^{\infty} \right\|_{m(s; q)} = \inf \left\|\left(\tau_{j_1,\dots, j_n} \right)_{j_1,\dots, j_n=1}^{\infty} \right\|_{s(q)'}\left\|\left(x_{j_1,\dots, j_n}^0 \right)_{j_1,\dots, j_n=1}^{\infty} \right\|_{w, s},
\end{equation*}
where the infimum ranges over all representations $x_{j_1,\dots, j_n} = \tau_{j_1,\dots, j_n} x_{j_1,\dots, j_n}^0$, $j_1,\dots, j_n \in \mathbb{N}$.
\end{description}
\end{examples}


Note that considering $(x_{j_1,\dots, j_n})_{j_1,\dots, j_n  = 1}^{\infty}$ in different orders of the indexes, one may end up with different $n$-sequences. In order to avoid this dependence on the order the indexes are taken, we shall henceforth consider only $n$-sequence classes $\gamma_s\left(\cdot, \mathbb{N}^n \right)$ enjoying the following symmetry condition: for any $E$-valued $n$-sequence $(x_{j_1,\dots, j_n})_{j_1,\dots, j_n  = 1}^{\infty}$,
\begin{equation*}
(x_{j_1,\dots, j_n})_{j_1,\dots, j_n  = 1}^{\infty} \in \gamma_s\left(E; \mathbb{N}^n \right) \iff (x_{j_{\sigma(1)},\dots,j_{\sigma(n)}})_{j_1,\dots, j_n = 1}^{\infty} \in \gamma_s\left(E; \mathbb{N}^n \right)
\end{equation*}
and
$$\left\|(x_{j_1,\dots, j_n})_{j_1,\dots, j_n = 1}^{\infty} \right\|_{\gamma_s\left(E; \mathbb{N}^n \right)} = \left\|(x_{j_{\sigma(1)},\dots, j_{\sigma(n)}})_{j_1,\dots, j_n = 1}^{\infty} \right\|_{\gamma_s\left(E; \mathbb{N}^n \right)},$$
for every permutation $\sigma$ of the set $\{1, \ldots,n\}$.

%
Henceforth $\gamma_{s_1},\dots, \gamma_{s_n}$ are linearly stable finitely determined sequence classes and $\gamma_s\left(\cdot, \mathbb{N}^n \right)$ is a finitely determined $n$-sequence class enjoying the symmetry condition above.

Now we are ready to introduce our main definition:

\begin{definition}\label{MSO}
Let $n \in \mathbb{N}$. A continuous $n$-linear operator $T\in\mathcal{L}(E_1,\dots,E_n;F)$ is said to be multiple $\gamma_{s, s_1,\dots, s_n}$-summing if
\begin{equation*}
 \left(T\left(x_{j_1}^{(1)},\dots, x_{j_n}^{(n)}\right)\right)_{j_1,\dots, j_n=1}^{\infty} \in \gamma_s\left(F; \mathbb{N}^n\right)
\end{equation*}
whenever $\left(x_{j}^{(i)} \right)_{j=1}^{\infty} \in \gamma_{s_i}(E_i), i=1,\dots, n$.
\end{definition}

Note that the symmetry condition of the $n$-sequence class $\gamma_s\left(\cdot, \mathbb{N}^n \right)$ guarantees that this concept is well defined. 

The set of all multiple $\gamma_{s, s_1,\dots, s_n}$-summing $n$-linear operators from $E_1 \times \cdots \times E_n$ to $F$, which is clearly a linear space, is denoted by $\mathcal{L}_{\gamma_{s, s_1,\dots, s_n}}^m(E_1,\dots, E_n; F)$. To give this space a suitable complete norm, we need the following result.



\begin{lemma}\label{LLL2.2.}
Let $E_1,\dots, E_n$, $F$ be Banach spaces and $T \in \mathcal{L}_{\gamma_{s, s_1,\dots, s_n}}^m(E_1,\dots, E_n; F) $. Then, the induced map
\begin{equation*}
\hat{T} : \gamma_{s_1}(E_1) \times \cdots \times \gamma_{s_n} (E_n) \rightarrow \gamma_s\left(F; \mathbb{N}^n \right)
\end{equation*}
given by
\begin{equation*}
\hat{T}\left(\left(x_{j_{1}}^{(1)} \right)_{j_{1}=1}^{\infty},\dots, \left(x_{j_{n}}^{(n)}\right)_{j_{n}=1}^{\infty} \right) = \left(T\left(x_{j_1}^{(1)},\dots, x_{j_n}^{(n)}\right) \right)_{j_1,\dots, j_n=1}^{\infty}
\end{equation*}
is well-defined continuous $n$-linear operator.
\end{lemma}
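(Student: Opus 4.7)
The plan is to break the statement into three pieces: well-definedness, $n$-linearity, and continuity. The first two are essentially immediate; the real work is to establish continuity, and the strategy I would take is a closed graph argument on each variable separately, followed by the classical fact that separately continuous multilinear maps between Banach spaces are jointly continuous.

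First I would note that well-definedness of $\hat{T}$ is just a restatement of Definition~\ref{MSO}: if $(x^{(i)}_j)_{j=1}^\infty \in \gamma_{s_i}(E_i)$ for every $i$, then $(T(x^{(1)}_{j_1},\dots,x^{(n)}_{j_n}))_{j_1,\dots,j_n=1}^\infty$ belongs to $\gamma_s(F;\mathbb{N}^n)$ by assumption. The $n$-linearity of $\hat{T}$ is then inherited from the $n$-linearity of $T$ together with the fact that $\gamma_{s_i}(E_i)$ and $\gamma_s(F;\mathbb{N}^n)$ are endowed with coordinatewise operations, so the check reduces to verifying that $\hat{T}$ is linear coordinate-by-coordinate in each slot.

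For continuity, I would fix an index $k \in \{1,\dots,n\}$ and elements $(x^{(i)}_j)_{j=1}^\infty \in \gamma_{s_i}(E_i)$ for $i \ne k$, and consider the linear map
\begin{equation*}
\Phi_k : \gamma_{s_k}(E_k) \longrightarrow \gamma_s(F;\mathbb{N}^n), \quad (y_j)_{j=1}^\infty \longmapsto \left(T(x^{(1)}_{j_1},\dots,y_{j_k},\dots,x^{(n)}_{j_n})\right)_{j_1,\dots,j_n=1}^\infty.
\end{equation*}
Both spaces are Banach, so to apply the closed graph theorem I would take a sequence $(y^{(m)})_m$ converging to $y$ in $\gamma_{s_k}(E_k)$ with $\Phi_k(y^{(m)}) \to z$ in $\gamma_s(F;\mathbb{N}^n)$, and show $z = \Phi_k(y)$. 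The key point is that the continuous embeddings $\gamma_{s_k}(E_k) \overset{1}{\hookrightarrow} \ell_\infty(E_k)$ and $\gamma_s(F;\mathbb{N}^n) \overset{1}{\hookrightarrow} \ell_\infty(F;\mathbb{N}^n)$ force coordinatewise convergence: $y^{(m)}_{j_k} \to y_{j_k}$ in $E_k$ for each $j_k$, and similarly $\Phi_k(y^{(m)})_{j_1,\dots,j_n} \to z_{j_1,\dots,j_n}$ in $F$. Continuity of $T$ in its $k$-th slot with all other slots fixed then identifies $z_{j_1,\dots,j_n}$ with $T(x^{(1)}_{j_1},\dots,y_{j_k},\dots,x^{(n)}_{j_n})$ coordinatewise, so $z = \Phi_k(y)$ and the closed graph theorem yields continuity of $\Phi_k$.

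Having shown that $\hat{T}$ is separately continuous in each of its $n$ variables, I would finally invoke the standard consequence of the uniform boundedness principle that a separately continuous $n$-linear map between Banach spaces is automatically jointly continuous. This gives a constant $C \ge 0$ such that
\begin{equation*}
\left\|\hat{T}\bigl((x^{(1)}_j)_{j=1}^\infty,\dots,(x^{(n)}_j)_{j=1}^\infty\bigr)\right\|_{\gamma_s(F;\mathbb{N}^n)} \le C \prod_{i=1}^n \left\|(x^{(i)}_j)_{j=1}^\infty\right\|_{\gamma_{s_i}(E_i)},
\end{equation*}
completing the proof. The main obstacle is really the closed graph step, and specifically verifying that the two continuous embeddings into $\ell_\infty$-type spaces are strong enough to pass to coordinatewise limits; this is where the embedding inequality built into the definition of an $n$-sequence class earns its keep.
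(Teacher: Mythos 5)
Your proof is correct and rests on the same two pillars as the paper's: the norm-one embeddings of $\gamma_{s_k}(E_k)$ and $\gamma_s(F;\mathbb{N}^n)$ into the corresponding $\ell_\infty$-type spaces (to upgrade norm convergence to coordinatewise convergence) and the continuity of $T$ (to identify the coordinatewise limit). The only structural difference is where the closed graph theorem is applied: the paper invokes it directly on the $n$-linear map $\hat{T}$ defined on the product $\gamma_{s_1}(E_1)\times\cdots\times\gamma_{s_n}(E_n)$ with the sum norm, whereas you apply it to each linear section $\Phi_k$ with the other $n-1$ slots frozen, and then pass from separate to joint continuity via the uniform boundedness principle. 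Your route is arguably the more scrupulous one, since the textbook closed graph theorem is a statement about linear operators; the multilinear version the paper tacitly uses is usually \emph{proved} exactly by the reduction you carry out. The price you pay is the extra appeal to the separate-implies-joint continuity theorem for multilinear maps on Banach spaces, which is standard. Either way the argument goes through, and the constant $C$ you obtain at the end is exactly what feeds into Theorem \ref{PPP2.2.}.
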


\begin{proof}
It is clear that $\hat{T}$ is well-defined and it is easy to check its $n$-linearity. To prove that $\hat{T} $ is a continuous operator, we will use the Closed Graph Theorem. We shall use the sum norm in the cartesian product.
Let
\begin{equation*}
\left(\left(x_{j_1}^{(1), k} \right)_{j_1=1}^{\infty},\dots, \left(x_{j_n}^{(n), k} \right)_{j_n=1}^{\infty} \right)_{k=1}^{\infty}
\end{equation*}
be a sequence in $\gamma_{s_1}(E_1) \times \cdots \times \gamma_{s_n} (E_n)$ converging to
\begin{equation*}
\left(\left(x_{j_1}^{(1)} \right)_{j_1=1}^{\infty},\dots, \left(x_{j_n}^{(n)} \right)_{j_n=1}^{\infty}\right)
\end{equation*}
and such that
\begin{equation}\label{EEEEE2}
\hat{T}\left(\left(x_{j_1}^{(1), k} \right)_{j_1=1}^{\infty},\dots, \left(x_{j_n}^{(n), k} \right)_{j_n=1}^{\infty}\right) \underset{k \rightarrow \infty}{\longrightarrow}  \left(z_{j_1,\dots, j_n}\right)_{j_1,\dots, j_n=1}^{\infty}.
\end{equation}
So, given $\epsilon > 0$ there exists $k_{0} \in \mathbb{N}$ such that for any  $k \ge k_0$ and $i=1,\dots, n$,
\begin{align*}
&\left\|x_{j_i}^{(i), k} - x_{j_i}^{(i)} \right\|_{E_i}\le \left\|\left(x_{j_i}^{(i), k} \right)_{j_i=1}^{\infty} - \left(x_{j_i}^{(i)} \right)_{j_i=1}^{\infty} \right\|_{\gamma_{s_i}(E_i)} \\
&\le \left\|\left(\left(x_{j_1}^{(1), k} \right)_{j_1=1}^{\infty},\dots, \left(x_{j_n}^{(n), k} \right)_{j_n=1}^{\infty} \right) - \left(\left(x_{j_1}^{(1)} \right)_{j_1=1}^{\infty},\dots, \left(x_{j_n}^{(n)} \right)_{j_n=1}^{\infty}\right) \right\|_{\gamma_{s_1}(E_1) \times \cdots \times \gamma_{s_n} (E_n)}< \epsilon.
\end{align*}
Thus, $x_{j_i}^{(i), k} \rightarrow x_{j_i}^{(i)}$ for every $i=1,\dots, n$. It follows from the continuity of $T$ that
\begin{equation*}
T\left(x_{j_1}^{(1), k},\dots, x_{j_n}^{(n), k} \right) \underset{k \rightarrow \infty}{\longrightarrow} T\left(x_{j_1}^{(1)} ,\dots, x_{j_n}^{(n)}  \right),
\end{equation*}
for all $j_1,\dots, j_n \in \mathbb{N}$. On the other hand, by \eqref{EEEEE2} we can take $k_1 \in \mathbb{N}$ such that for any $k \ge k_1$ and $i=1,\dots, n$,
%
%
\begin{align*}
&\left\|T\left(x_{j_1}^{(1), k},\dots, x_{j_n}^{(n), k} \right) - z_{j_1,\dots, j_n} \right\|_F \le \left\| \left( T\left(x_{j_1}^{(1), k},\dots, x_{j_n}^{(n), k}\right) \right)_{j_1,\dots, j_n=1}^{\infty} - \left(z_{j_1,\dots, j_n}\right)_{j_1,\dots, j_n=1}^{\infty}  \right\|_{\gamma\left(F; \mathbb{N}^n\right)}\\
&= \left\|\hat{T}\left(\left(x_{j_{1}}^{(1),k} \right)_{j_{1}=1}^{\infty},\dots, \left(x_{j_{n}}^{(n),k}\right)_{j_{n}=1}^{\infty} \right) - \left(z_{j_1,\dots, j_n}\right)_{j_1,\dots, j_n=1}^{\infty} \right\|_{\gamma\left(F; \mathbb{N}^n\right)}< \epsilon.
\end{align*}
So,
\begin{align*}
T\left(x_{j_1}^{(1), k},\dots, x_{j_n}^{(n), k} \right) \underset{k \rightarrow \infty}{\longrightarrow} z_{j_1,\dots, j_n}
\end{align*}
for all $ j_1,\dots, j_n \in \mathbb{N}$. Consequently,
\begin{align*}
\hat{T}\left(\left(x_{j_{1}}^{(1)} \right)_{j_{1}=1}^{\infty},\dots, \left(x_{j_{n}}^{(n)}\right)_{j_{n}=1}^{\infty} \right) &=  \left( T\left(x_{j_1}^{(1)},\dots, x_{j_n}^{(n)}\right) \right)_{j_1,\dots, j_n=1}^{\infty}=  \left(z_{j_1,\dots, j_n}\right)_{j_1,\dots, j_n=1}^{\infty} ,
\end{align*}
proving that $\hat{T}$ has closed graph, hence it is continuous.
\end{proof}

The converse of Lemma \ref{LLL2.2.} is obviously true. The next proposition will be useful to introduce the norm that will make the linear space $\mathcal{L}_{\gamma_{s, s_1,\dots, s_n}}^m(E_1,\dots, E_n; F)$ a Banach space.

\begin{theorem}\label{PPP2.2.}
Let $E_1,\dots, E_n, F$ be Banach spaces and $T \in \mathcal{L}(E_1,\dots, E_n; F)$. The following statements are equivalent:

\begin{description}
\item $($a$)$ $T \in \mathcal{L}_{\gamma_{s, s_1,\dots, s_n}}^m(E_1,\dots, E_n; F)$.
\item $($b$)$ There is $C > 0$, such that
\begin{equation}\label{EEEE1.2}
\left\|\left(T\left(x_{j_1}^{(1)},\dots, x_{j_n}^{(n)}\right) \right)_{j_1,\dots, j_n = 1}^{\infty}\right\|_{\gamma_s\left(F; \mathbb{N}^n\right)} \le C \prod_{i=1}^n\left\|\left(x_{j}^{(i)} \right)_{j=1}^{\infty} \right\|_{\gamma_{s_i}(E_i)},
\end{equation}
whenever $\left(x_j^{(i)} \right)_{j=1}^{\infty} \in \gamma_{s_i}(E_i), i=1,\dots, n$.

\item $($c$)$ There is $C > 0$ such that
\begin{equation*}
\left\|\left(T\left(x_{j_1}^{(1)},\dots, x_{j_n}^{(n)}\right) \right)_{j_1,\dots, j_n=1}^{m_1,\dots, m_n}\right\|_{\gamma_s\left(F; \mathbb{N}^n\right)} \le C \prod_{i=1}^n\left\|\left(x_{j}^{(i)} \right)_{j=1}^{m_i} \right\|_{\gamma_{s_i}(E_i)},
\end{equation*}
for any $m_1,\dots, m_n \in \mathbb{N}$ and $x_j^{(i)} \in E_i$, $i=1,\dots, n$.
\end{description}

\end{theorem}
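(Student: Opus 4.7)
The plan is to establish the chain of implications (a) $\Rightarrow$ (b) $\Rightarrow$ (c) $\Rightarrow$ (a), so the two nontrivial bridges are (a) $\Rightarrow$ (b) (for which Lemma \ref{LLL2.2.} does all the work) and the pair (b) $\Leftrightarrow$ (c) (which is a finite/infinite passage driven by the finitely determined property).

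For (a) $\Rightarrow$ (b), assume $T \in \mathcal{L}_{\gamma_{s,s_1,\dots,s_n}}^m(E_1,\dots,E_n;F)$. Lemma \ref{LLL2.2.} provides that the induced map $\hat T : \gamma_{s_1}(E_1) \times \cdots \times \gamma_{s_n}(E_n) \to \gamma_s(F;\mathbb{N}^n)$ is continuous and $n$-linear, so setting $C := \|\hat T\|$ and unwinding the definition of the $n$-linear operator norm against an arbitrary tuple of sequences immediately yields \eqref{EEEE1.2}. For (b) $\Rightarrow$ (c), given finite tuples $(x_j^{(i)})_{j=1}^{m_i}$ (for $i=1,\dots,n$), I would extend each of them by zeros beyond index $m_i$ to obtain infinite sequences in $c_{00}(E_i) \subset \gamma_{s_i}(E_i)$ and apply (b) to these extensions; the resulting $n$-sequence on the left is supported on the box $\{1,\dots,m_1\}\times\cdots\times\{1,\dots,m_n\}$, so its $\gamma_s(F;\mathbb{N}^n)$-norm equals that of the truncation appearing in (c), once one uses the standing convention that the $\gamma$-norm of a finite tuple agrees with the norm of its zero-extension.

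For (c) $\Rightarrow$ (a), take $(x_j^{(i)})_{j=1}^\infty \in \gamma_{s_i}(E_i)$ for $i=1,\dots,n$ and use the finitely determined property of $\gamma_s(\cdot;\mathbb{N}^n)$ together with (c):
\begin{align*}
&\left\|\left(T(x_{j_1}^{(1)},\dots,x_{j_n}^{(n)})\right)_{j_1,\dots,j_n=1}^\infty\right\|_{\gamma_s(F;\mathbb{N}^n)}
= \sup_{m_1,\dots,m_n}\left\|\left(T(x_{j_1}^{(1)},\dots,x_{j_n}^{(n)})\right)_{j_1,\dots,j_n=1}^{m_1,\dots,m_n}\right\|_{\gamma_s(F;\mathbb{N}^n)}\\
&\qquad\le C \sup_{m_1,\dots,m_n}\prod_{i=1}^n\left\|(x_j^{(i)})_{j=1}^{m_i}\right\|_{\gamma_{s_i}(E_i)}
= C \prod_{i=1}^n\left\|(x_j^{(i)})_{j=1}^\infty\right\|_{\gamma_{s_i}(E_i)},
\end{align*}
where in the last equality the supremum over $m_1,\dots,m_n$ factors through the product by monotonicity of the finite-truncation norms (itself a consequence of the finitely determined axiom) and then is identified with the infinite-sequence norm. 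In particular the left-hand side is finite, which places the $n$-sequence in $\gamma_s(F;\mathbb{N}^n)$ and gives (a).

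The only delicate step is the zero-extension identification used in (b) $\Rightarrow$ (c): one needs that the $\gamma_{s_i}$-norm of a finite tuple coincides with the $\gamma_{s_i}$-norm of its infinite zero-extension, and likewise that the $\gamma_s(\cdot;\mathbb{N}^n)$-norm of a finite box of an $n$-sequence agrees with that of its zero-extension to all of $\mathbb{N}^n$. This is the one place where the finitely determined hypothesis is genuinely doing work rather than bookkeeping; everything else is a direct application of Lemma \ref{LLL2.2.} and the sup formula for the finitely determined norm.
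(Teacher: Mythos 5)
Your proposal is correct and follows essentially the same route as the paper: the substantive implication (a) $\Rightarrow$ (b) is obtained from the continuity of the induced operator $\hat T$ of Lemma \ref{LLL2.2.} with $C=\|\hat T\|$, while (b) $\Rightarrow$ (c) and (c) $\Rightarrow$ (a) are the zero-extension/truncation bookkeeping and the finitely determined sup formula, which the paper dispatches as ``straightforward'' and you merely spell out in more detail.
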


\begin{proof}
$(\text{b}) \Rightarrow (\text{c})$ is straightforward and $(\text{c}) \Rightarrow (\text{a})$ follows easily from the fact that the underlying sequence classes and  $n$-sequence class are finitely determined. Thus, we have only to prove
$(\text{a}) \Rightarrow (\text{b})$. To do so, suppose that $T \in \mathcal{L}_{\gamma_{s, s_1,\dots, s_n}}^m(E_1,\dots, E_n; F)$ and define

\begin{equation*}
\hat{T} : \gamma_{s_1}(E_1)\times \cdots \times \gamma_{s_n}(E_n) \rightarrow \gamma_s\left(F; \mathbb{N}^n\right),
\end{equation*}
as in Lemma
\ref{LLL2.2.}. Thus, $\hat T$ is a well-defined $n$-linear continuous operator, from which it follows that
\begin{align*}
\left\|\left(T\left(x_{j_1}^{(1)},\dots, x_{j_n}^{(n)} \right) \right)_{j_1,\dots, j_n=1}^{\infty} \right\|_{\gamma_s\left(F; \mathbb{N}^n\right)} &= \left\|\hat{T}\left(\left(x_{j_1}^{(1)} \right)_{j_1},\dots, \left(x_{j_n}^{(n)} \right)_{j_n} \right) \right\|_{\gamma_s\left(F; \mathbb{N}^n\right)}\\
&\le \left\|\hat{T}\right\|\prod_{i=1}^n\left\|\left(x_{j_i}^{(i)} \right)_{j_i = 1}^{\infty} \right\|_{\gamma_{s_i}(E_i)}.
\end{align*}
\end{proof}

Now standard arguments give the following result:%


\begin{proposition}\label{PPP3.3.}
Let $E_1,\dots, E_n$ and $F$ be Banach spaces. \\
{\rm (a)} The infimum of the constant $C > 0$ satisfying \eqref{EEEE1.2} in Proposition \ref{PPP2.2.} defines a norm on $\mathcal{L}_{\gamma_{s, s_1,\dots, s_n}}^m(E_1,\dots, E_n; F)$, which is denoted by $\|\cdot \|_{\mathcal{L}_{\gamma_{s, s_1,\dots, s_n}}^m}$.\\
{\rm (b)} $\|T\| \le  \| T\|_{\mathcal{L}_{\gamma_{s, s_1,\dots, s_n}}^m}$ for every $T \in \mathcal{L}_{\gamma_{s, s_1,\dots, s_n}}^m(E_1,\dots, E_n; F)$.\\
{\rm (c)} $\left(\mathcal{L}_{\gamma_{s, s_1,\dots, s_n}}^m(E_1,\dots, E_n; F), \| \cdot\|_{\mathcal{L}_{\gamma_{s, s_1,\dots, s_n}}^m} \right)$ is a Banach space.
\end{proposition}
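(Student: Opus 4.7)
My plan is to piggyback on Lemma \ref{LLL2.2.} in order to identify $(\mathcal{L}_{\gamma_{s, s_1,\dots, s_n}}^m(E_1,\dots,E_n;F), \|\cdot\|_{\mathcal{L}_{\gamma_{s, s_1,\dots, s_n}}^m})$ isometrically with a subspace of the Banach space $\mathcal{L}(\gamma_{s_1}(E_1),\dots,\gamma_{s_n}(E_n);\gamma_s(F;\mathbb{N}^n))$ of continuous $n$-linear maps (with the sum norm on the product). Concretely, from Theorem \ref{PPP2.2.} the admissible constants $C$ in \eqref{EEEE1.2} are precisely those with $C \ge \|\hat T\|$, so the infimum definition gives $\|T\|_{\mathcal{L}_{\gamma_{s, s_1,\dots, s_n}}^m} = \|\hat T\|$. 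Since $T \mapsto \hat T$ is manifestly linear in $T$, this immediately hands me part (a): positive homogeneity and subadditivity for $\|\cdot\|_{\mathcal{L}_{\gamma_{s, s_1,\dots, s_n}}^m}$ descend from the corresponding properties of the operator norm, while positive definiteness will follow once (b) is established.

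For (b), I would test the defining inequality on the trivially supported sequences. Given $x_i \in E_i$, consider $(x_i, 0, 0, \dots) \in c_{00}(E_i) \subset \gamma_{s_i}(E_i)$. Linear stability of $\gamma_{s_i}$, applied to the map $\mathbb{K} \to E_i, \lambda \mapsto \lambda x_i$ (whose operator norm equals $\|x_i\|$), yields $\|(x_i,0,0,\dots)\|_{\gamma_{s_i}(E_i)} \le \|x_i\|_{E_i}$. On the other side of the inequality, the $n$-sequence $(T(x_{j_1}^{(1)},\dots,x_{j_n}^{(n)}))_{j_1,\dots,j_n=1}^{\infty}$ vanishes outside the index $(1,\dots,1)$, where it equals $T(x_1,\dots,x_n)$, so the embedding $\gamma_s(F;\mathbb{N}^n) \overset{1}{\hookrightarrow} \ell_\infty(F;\mathbb{N}^n)$ provides
$$\|T(x_1,\dots,x_n)\|_F \le \left\|\left(T(x_{j_1}^{(1)},\dots,x_{j_n}^{(n)})\right)_{j_1,\dots,j_n=1}^{\infty}\right\|_{\gamma_s(F;\mathbb{N}^n)}.$$
Chaining these with \eqref{EEEE1.2} for any admissible $C$ and infimizing over such $C$ yields $\|T(x_1,\dots,x_n)\|_F \le \|T\|_{\mathcal{L}_{\gamma_{s, s_1,\dots, s_n}}^m}\prod_{i=1}^n\|x_i\|_{E_i}$, which is (b).

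For (c), let $(T_k)$ be $\|\cdot\|_{\mathcal{L}_{\gamma_{s, s_1,\dots, s_n}}^m}$-Cauchy. By (b) it is also Cauchy in $\mathcal{L}(E_1,\dots,E_n;F)$, hence converges in operator norm to some $T \in \mathcal{L}(E_1,\dots,E_n;F)$. Simultaneously, the identification from the first paragraph makes $(\hat T_k)$ Cauchy in the Banach space $\mathcal{L}(\gamma_{s_1}(E_1),\dots,\gamma_{s_n}(E_n);\gamma_s(F;\mathbb{N}^n))$; call its limit $S$. I now want to check coordinatewise that $S$ comes from $T$: fix an input $((x_{j_1}^{(1)})_{j_1},\dots,(x_{j_n}^{(n)})_{j_n})$ in the product. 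Convergence $\hat T_k(\cdots) \to S(\cdots)$ in $\gamma_s(F;\mathbb{N}^n)$ forces coordinatewise convergence via $\overset{1}{\hookrightarrow} \ell_\infty(F;\mathbb{N}^n)$, and the $(j_1,\dots,j_n)$-th coordinate $T_k(x_{j_1}^{(1)},\dots,x_{j_n}^{(n)})$ also converges to $T(x_{j_1}^{(1)},\dots,x_{j_n}^{(n)})$ by operator-norm convergence of $T_k$; matching the two limits gives $S((x_{j_1}^{(1)})_{j_1},\dots,(x_{j_n}^{(n)})_{j_n}) = (T(x_{j_1}^{(1)},\dots,x_{j_n}^{(n)}))_{j_1,\dots,j_n}$. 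This simultaneously places $T$ in $\mathcal{L}_{\gamma_{s, s_1,\dots, s_n}}^m(E_1,\dots,E_n;F)$, identifies $\hat T = S$, and shows $T_k \to T$ in $\|\cdot\|_{\mathcal{L}_{\gamma_{s, s_1,\dots, s_n}}^m}$. The only step that I expect to require mild care is the identity $\|T\|_{\mathcal{L}_{\gamma_{s, s_1,\dots, s_n}}^m} = \|\hat T\|$ that drives the whole argument; once it is clean, (a)--(c) are standard transport from the operator-norm theory.
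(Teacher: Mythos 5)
The paper offers no proof of this proposition, dismissing it as ``standard arguments,'' and your write-up is a correct and complete rendering of exactly that standard argument: the identity $\|T\|_{\mathcal{L}_{\gamma_{s, s_1,\dots, s_n}}^m}=\|\hat T\|$ from Lemma \ref{LLL2.2.} and Theorem \ref{PPP2.2.}, the test on singly supported sequences (using $\|e_1\|_{\gamma_{s_i}(\mathbb{K})}=1$, linear stability of $\gamma_{s_i}$, and the norm-one inclusion into $\ell_\infty(F;\mathbb{N}^n)$) for part (b), and the coordinatewise matching of the two limits for completeness. No gaps; this is the proof the authors had in mind.
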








\section{$\left(\mathcal{L}_{\gamma_{s, s_1,\dots, s_n}}^m, \| \cdot\|_{\mathcal{L}_{\gamma_{s, s_1,\dots, s_n}}^m} \right)$ is a Banach ideal}\label{Sec3}

After proving that 
$\left(\mathcal{L}_{\gamma_{s, s_1,\dots, s_n}}^m, \| \cdot\|_{\mathcal{L}_{\gamma_{s, s_1,\dots, s_n}}^m} \right)$ is a Banach ideal of multilinear operators, we will establish the relationship between the class of multiple $\gamma_{s, s_1,\dots, s_n}$-summing operators and the class of $\gamma_{s, s_1,\dots, s_n}$-summing operators at the origin, introduced in \cite{S13}.

The proof that $\left(\mathcal{L}_{\gamma_{s, s_1,\dots, s_n}}^m, \| \cdot\|_{\mathcal{L}_{\gamma_{s, s_1,\dots, s_n}}^m} \right)$ is a Banach ideal shall be splitted into several steps. The first step was taken in Proposition \ref{PPP3.3.}. Before proceeding to the next steps, let us define the conditions the sequences classes shall be supposed to satisfy.

\begin{definition}
We say that an $n$-sequence class $\gamma_s\left(\cdot, \mathbb{N}^n \right)$ is linearly stable if \\$\left(u(x_{j_1,\dots, j_n}) \right)_{j_1,\dots, j_n = 1}^{\infty} \in \gamma_s\left(F; \mathbb{N}^n \right)$ whenever $\left(x_{j_1,\dots, j_n} \right)_{j_1,\dots, j_n = 1}^{\infty} \in \gamma_s\left(E; \mathbb{N}^n \right)$ and $u \in \mathcal{L}(E; F)$, and, in this case,
	\begin{equation*}
	\left\|\hat{u} : \gamma_s\left(E; \mathbb{N}^n\right) \rightarrow \gamma_s\left(F; \mathbb{N}^n \right) \right\| = \|u\|.
	\end{equation*}
\end{definition}
 All $n$-sequence classes presented in Example \ref{ExSc} are linearly stable. From now on, all $n$-sequence classes are supposed to be finitely determined and linearly stable.

Given sequence classes $\gamma_{s_1},\dots, \gamma_{s_n}$ and an $n$-sequence class $\gamma_s\left(\cdot; \mathbb{N}^n \right)$, we write
$$\gamma_{s_1}(\mathbb{K})\cdots \gamma_{s_n}(\mathbb{K}) \overset{mult, 1}{\hookrightarrow} \gamma_s\left(\mathbb{K}; \mathbb{N}^n\right)$$
if $\left(\lambda_{j_1}^{(1)}\cdots \lambda_{j_n}^{(n)} \right)_{j_1,\dots, j_n=1}^{\infty} \in \gamma_s\left(\mathbb{K}; \mathbb{N}^n\right)$ and
\begin{equation*}
\left\|\left(\lambda_{j_1}^{(1)}\cdots \lambda_{j_n}^{(n)} \right)_{j_1,\dots, j_n=1}^{\infty} \right\|_{\gamma_s\left(\mathbb{K}; \mathbb{N}^n\right)} \le \prod_{i=1}^{\infty}\left\|\left(\lambda_j^{(i)} \right)_{j=1}^{\infty} \right\|_{\gamma_{s_i}(\mathbb{K})},
\end{equation*}
whenever $\left(\lambda_j^{(i)} \right)_{j=1}^{\infty} \in \gamma_{s_i}(\mathbb{K}), i=1,\dots, n$.

\begin{proposition}\label{PPP3.4.} Suppose that $\gamma_{s_1}(\mathbb{K})\cdots \gamma_{s_n}\left(\mathbb{K}\right) \overset{mult, 1}{\hookrightarrow} \gamma_s\left(\mathbb{K}; \mathbb{N}^n\right)$. Then, for all Banach spaces $E_1,\dots, E_n$ and $F$, the finite type $n$-linear continuous operators are contained in $\mathcal{L}_{\gamma_{s, s_1,\dots, s_n}}^m(E_1,\dots, E_n; F)$.
\end{proposition}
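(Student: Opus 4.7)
The plan is to reduce to operators of rank one, which generate the finite type operators, and apply linear stability together with the multiplicative embedding hypothesis.

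First I would recall that every finite type $n$-linear operator is a finite linear combination of operators of the form
\[
T(x_1,\dots,x_n) = \varphi_1(x_1)\cdots \varphi_n(x_n)\, b,
\]
with $\varphi_i \in E_i'$ and $b \in F$. Since $\mathcal{L}_{\gamma_{s,s_1,\dots,s_n}}^m(E_1,\dots,E_n;F)$ is a linear subspace, it suffices to show that each such elementary operator belongs to the class, and then invoke linearity.

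Fix such a $T$ and arbitrary $\left(x_j^{(i)}\right)_{j=1}^{\infty} \in \gamma_{s_i}(E_i)$ for $i=1,\dots,n$. Using the linear stability of each sequence class $\gamma_{s_i}$ applied to $\varphi_i \in \mathcal{L}(E_i;\mathbb{K})$, we get
\[
\left(\varphi_i\bigl(x_j^{(i)}\bigr)\right)_{j=1}^{\infty} \in \gamma_{s_i}(\mathbb{K}), \quad \left\|\left(\varphi_i\bigl(x_j^{(i)}\bigr)\right)_{j=1}^{\infty}\right\|_{\gamma_{s_i}(\mathbb{K})} \le \|\varphi_i\|\, \left\|\left(x_j^{(i)}\right)_{j=1}^{\infty}\right\|_{\gamma_{s_i}(E_i)}.
\]
Applying the assumption $\gamma_{s_1}(\mathbb{K})\cdots \gamma_{s_n}(\mathbb{K}) \overset{mult,1}{\hookrightarrow} \gamma_s(\mathbb{K};\mathbb{N}^n)$ to the scalar $n$-sequence $\left(\varphi_1\bigl(x_{j_1}^{(1)}\bigr)\cdots \varphi_n\bigl(x_{j_n}^{(n)}\bigr)\right)_{j_1,\dots,j_n=1}^{\infty}$ yields that it lies in $\gamma_s(\mathbb{K};\mathbb{N}^n)$ with norm bounded by $\prod_{i=1}^{n}\|\varphi_i\|\, \left\|\left(x_j^{(i)}\right)_{j=1}^{\infty}\right\|_{\gamma_{s_i}(E_i)}$.

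Finally, consider the linear operator $u:\mathbb{K}\to F$ given by $u(\lambda) = \lambda b$, which has norm $\|b\|$. By the linear stability of the $n$-sequence class $\gamma_s(\cdot;\mathbb{N}^n)$, the induced map $\hat u$ carries the above $n$-sequence into $\gamma_s(F;\mathbb{N}^n)$, and
\[
\left\|\left(T\bigl(x_{j_1}^{(1)},\dots,x_{j_n}^{(n)}\bigr)\right)_{j_1,\dots,j_n=1}^{\infty}\right\|_{\gamma_s(F;\mathbb{N}^n)} \le \|b\|\prod_{i=1}^{n}\|\varphi_i\|\, \left\|\left(x_j^{(i)}\right)_{j=1}^{\infty}\right\|_{\gamma_{s_i}(E_i)}.
\]
By Theorem~\ref{PPP2.2.}~(b), $T$ is multiple $\gamma_{s,s_1,\dots,s_n}$-summing, and taking linear combinations concludes the proof. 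I do not anticipate any real obstacle: the only moving part is lining up the three stability/multiplicativity ingredients (linear stability of the $\gamma_{s_i}$'s, the multiplicative embedding, and linear stability of $\gamma_s(\cdot;\mathbb{N}^n)$), which factor the elementary tensor through $\mathbb{K}$.
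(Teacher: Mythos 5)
Your proposal is correct and follows essentially the same route as the paper's proof: factor the elementary operator through $\mathbb{K}$, apply linear stability of each $\gamma_{s_i}$, then the multiplicative embedding, then linear stability of $\gamma_s(\cdot;\mathbb{N}^n)$ via $\lambda\mapsto\lambda b$, and finish by linearity of the class. The only difference is that you also track the norm estimate and cite Theorem~\ref{PPP2.2.}, which the paper's argument does not need.
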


\begin{proof}
Consider the $n$-linear operator
\begin{equation*}
B : E_1\times \cdots \times E_n \longrightarrow F~,~
B(x_1,\dots, x_n) = \varphi^{(1)}(x_1)\cdots \varphi^{(n)}(x_n)b,
\end{equation*}
where $b \in F$, $\varphi^{(r)} \in E_r'$, $r=1,\dots, n$. Consider also the linear operator
$$
f : \mathbb{K} \rightarrow F~,~f(\lambda) = \lambda b.$$
It is clear that $\|f\| = \|b\|$. 
Given $\left(x_j^{(r)} \right)_{j=1}^{\infty} \in \gamma_{s_r}(E_r), r=1,\dots, n$, it follows from linear stability of $\gamma_{s_r}$ that $\left(\varphi^{(r)}\left(x_j^{(r)}\right) \right)_{j=1}^{\infty} \in \gamma_{s_r}(\mathbb{K})$, $r=1,\dots, n$. Now, from  $\gamma_{s_1}(\mathbb{K})\cdots \gamma_{s_n}(\mathbb{K}) \overset{mult, 1}{\hookrightarrow} \gamma_s\left(\mathbb{K}; \mathbb{N}^n\right)$  it follows that



\begin{equation*}
\left(\varphi^{(1)}\left(x_{j_1}^{(1)}\right)\cdots \varphi^{(n)}\left(x_{j_n}^{(n)}\right) \right)_{j_1,\dots, j_n=1}^{\infty} \in \gamma_s\left(\mathbb{K}; \mathbb{N}^n\right).
\end{equation*}
Therefore, from linear stability of $\gamma_s\left(\cdot; \mathbb{N}^n \right)$ we have that
\begin{equation*}
\left(\varphi^{(1)}\left(x_{j_1}^{(1)}\right)\cdots \varphi^{(n)}\left(x_{j_n}^{(n)}\right)b \right)_{j_1,\dots, j_n=1}^{\infty} = \left(f\left(\varphi^{(1)}\left(x_{j_1}^{(1)}\right)\cdots \varphi^{(n)}\left(x_{j_n}^{(n)}\right) \right) \right)_{j_1,\dots, j_n=1}^{\infty} \in \gamma_s\left(F; \mathbb{N}^n\right),
\end{equation*}
what proves that $B \in \mathcal{L}_{\gamma_{s, s_1,\dots, s_n}}^m(E_1\dots, E_n; F)$. Since $\mathcal{L}_{\gamma_{s, s_1,\dots, s_n}}^m(E_1\dots, E_n; F)$
%
%
is a linear space, we conclude that it contains that the finite type operators.
\end{proof}

The next result proves the multi-ideal property.

\begin{proposition}\label{PPP3.5.}
Let $E_1,\dots, E_n, G_1,\dots, G_n, F$ and $H$ be Banach spaces, $t \in \mathcal{L}(F; H), T \in \mathcal{L}_{\gamma_{s, s_1,\dots, s_n}}^m(E_1,\dots, E_n; F)$ and $u_i \in \mathcal{L}(G_i; E_i), i=1,\dots, n$. Then

\begin{equation*}
t \circ T \circ (u_1,\dots, u_n) \in \mathcal{L}_{\gamma_{s, s_1,\dots, s_n}}^m(G_1,\dots, G_n; H)
\end{equation*}
and
\begin{equation*}
\| t \circ T \circ (u_1,\dots, u_n)\|_{\mathcal{L}_{\gamma_{s, s_1,\dots, s_n}}^m}\le \|t\|\| T\|_{\mathcal{L}_{\gamma_{s, s_1,\dots, s_n}}^m}\|u_1\|\cdots \|u_n\|.
\end{equation*}
\end{proposition}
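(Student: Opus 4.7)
The plan is to chain together the three linear-stability facts (one for each $u_i$ and one for $t$) with the multiple summing property of $T$, and read off both the membership and the norm bound. By the equivalence (a) $\Leftrightarrow$ (b) in Theorem \ref{PPP2.2.} it suffices to produce a constant $C$ such that inequality \eqref{EEEE1.2} holds for $t \circ T \circ (u_1, \dots, u_n)$, so the whole proof reduces to estimating one norm in $\gamma_s(H; \mathbb{N}^n)$.

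Fix sequences $(x_j^{(i)})_{j=1}^{\infty} \in \gamma_{s_i}(G_i)$ for $i = 1, \dots, n$. I would argue in four steps. First, by linear stability of each $\gamma_{s_i}$, the sequence $(u_i(x_j^{(i)}))_{j=1}^{\infty}$ lies in $\gamma_{s_i}(E_i)$ with
\begin{equation*}
\|(u_i(x_j^{(i)}))_{j=1}^{\infty}\|_{\gamma_{s_i}(E_i)} \le \|u_i\| \cdot \|(x_j^{(i)})_{j=1}^{\infty}\|_{\gamma_{s_i}(G_i)}.
\end{equation*}
Second, because $T$ is multiple $\gamma_{s, s_1, \dots, s_n}$-summing, Theorem \ref{PPP2.2.}(b) applied with $C = \|T\|_{\mathcal{L}_{\gamma_{s, s_1, \dots, s_n}}^m}$ gives
\begin{equation*}
\|(T(u_1(x_{j_1}^{(1)}), \dots, u_n(x_{j_n}^{(n)})))_{j_1, \dots, j_n = 1}^{\infty}\|_{\gamma_s(F; \mathbb{N}^n)} \le \|T\|_{\mathcal{L}_{\gamma_{s, s_1, \dots, s_n}}^m} \prod_{i=1}^n \|(u_i(x_j^{(i)}))_{j=1}^{\infty}\|_{\gamma_{s_i}(E_i)}.
\end{equation*}
Third, by linear stability of the $n$-sequence class $\gamma_s(\cdot; \mathbb{N}^n)$ applied to $t \in \mathcal{L}(F; H)$, the coordinatewise application of $t$ preserves membership in $\gamma_s(\cdot; \mathbb{N}^n)$ and satisfies $\|\hat{t}\| = \|t\|$, so
\begin{equation*}
\|(t \circ T(u_1(x_{j_1}^{(1)}), \dots, u_n(x_{j_n}^{(n)})))_{j_1, \dots, j_n = 1}^{\infty}\|_{\gamma_s(H; \mathbb{N}^n)} \le \|t\| \cdot \|(T(u_1(x_{j_1}^{(1)}), \dots, u_n(x_{j_n}^{(n)})))_{j_1, \dots, j_n = 1}^{\infty}\|_{\gamma_s(F; \mathbb{N}^n)}.
\end{equation*}

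Fourth, concatenating the three estimates yields
\begin{equation*}
\|((t \circ T \circ (u_1, \dots, u_n))(x_{j_1}^{(1)}, \dots, x_{j_n}^{(n)}))_{j_1, \dots, j_n = 1}^{\infty}\|_{\gamma_s(H; \mathbb{N}^n)} \le \|t\| \cdot \|T\|_{\mathcal{L}_{\gamma_{s, s_1, \dots, s_n}}^m} \prod_{i=1}^n \|u_i\| \cdot \prod_{i=1}^n \|(x_j^{(i)})_{j=1}^{\infty}\|_{\gamma_{s_i}(G_i)},
\end{equation*}
which is exactly condition (b) of Theorem \ref{PPP2.2.} for $t \circ T \circ (u_1, \dots, u_n)$ with constant $C = \|t\| \cdot \|T\|_{\mathcal{L}_{\gamma_{s, s_1, \dots, s_n}}^m} \prod_i \|u_i\|$. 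This gives both the membership $t \circ T \circ (u_1, \dots, u_n) \in \mathcal{L}_{\gamma_{s, s_1, \dots, s_n}}^m(G_1, \dots, G_n; H)$ and, by taking the infimum in the definition of $\|\cdot\|_{\mathcal{L}_{\gamma_{s, s_1, \dots, s_n}}^m}$ (Proposition \ref{PPP3.3.}(a)), the announced norm estimate.

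There is no real obstacle here; the only thing requiring attention is making sure that the linear stability of $\gamma_s(\cdot; \mathbb{N}^n)$ has been set up for the coordinatewise action of a single linear operator (which it has, in the definition just before Example \ref{ExSc}'s successor), so that applying $t$ to every entry of an $n$-indexed family is legitimate and norm-preserving in the operator norm sense.
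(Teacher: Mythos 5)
Your proof is correct and follows essentially the same route as the paper's: linear stability of each $\gamma_{s_i}$ to push the sequences through the $u_i$, the summing property of $T$ (via Theorem \ref{PPP2.2.}(b)) in the middle, and linear stability of $\gamma_s(\cdot;\mathbb{N}^n)$ to apply $t$ coordinatewise, then reading off the constant $C=\|t\|\,\|T\|_{\mathcal{L}_{\gamma_{s,s_1,\dots,s_n}}^m}\prod_i\|u_i\|$. If anything, your write-up makes the norm bookkeeping more explicit than the paper does.
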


\begin{proof}
Let $\left(x_{j_i}^{(i)}\right)_{j_i=1}^{\infty} \in \gamma_{s_i}(G_i)$, $i=1,\dots, n$. Since each $\gamma_{s_i}$ is linearly stable, we have
\begin{equation*}
\left(u_i\left(x_j^{(i)} \right) \right)_{j=1}^{\infty} \in \gamma_{s_i}(E_i)
\end{equation*}
for $i = 1, \ldots n$. As $\mathcal{L}_{\gamma_{s, s_1,\dots, s_n}}^m(E_1\dots, E_n; F)$, we get
\begin{equation*}
\left(T\circ (u_1,\dots, u_n)\left(x_{j_1}^{(1)},\dots, x_{j_n}^{(n)} \right) \right)_{j_1,\dots, j_n=1}^{\infty} \in \gamma_s\left(F; \mathbb{N}^n\right).
\end{equation*}
It follows from the linear stability of $\gamma_s\left(\cdot; \mathbb{N}^n \right)$ that 
\begin{equation*}
\left(t \circ T \circ (u_1,\dots, u_n)\left(x_{j_1}^{(1)},\dots, x_{j_n}^{(n)} \right) \right)_{j_1,\dots, j_n=1}^{\infty}  \in \gamma_s\left(H; \mathbb{N}^n\right),
\end{equation*}
whenever $\left(x_{j_i}^{(i)}\right)_{j_i=1}^{\infty} \in \gamma_{s_i}(G_i)$, $i=1,\dots, n$. Therefore
\begin{equation*}
t \circ T \circ (u_1,\dots, u_n) \in \mathcal{L}_{\gamma_{s, s_1,\dots, s_n}}^m(G_1,\dots, G_n; H).
\end{equation*}
Thus, from linear stability of $\gamma_s\left(\cdot; \mathbb{N}^n \right)$ and $\gamma_{s_i}$, follows that
\begin{align*}
&\left\|\left( t \circ T(u_1,\dots, u_n)\left(x_{j_1}^{(1)},\dots, x_{j_n}^{(n)}\right) \right)_{j_1,\dots, j_n=1}^{\infty} \right\|_{\gamma_s\left(H; \mathbb{N}^n\right)}\\
&\le \|t\|\| T\|_{\mathcal{L}_{\gamma_{s, s_1,\dots, s_n}}^m}\|u_1\|\cdots \|u_n\|\prod_{i=1}^{n}\left\|\left(x_{j_i}^{(i)} \right)_{j_i=1}^{\infty} \right\|_{\gamma_{s_i}(E_i)},
\end{align*}
which completes the proof.
\end{proof}

Now we take the last step:

\begin{proposition}\label{PPP3.6.}
Consider the  map
 $$I_n : \mathbb{K}^n \rightarrow \mathbb{K}~,~I_n(\lambda_1,\dots, \lambda_n) = \lambda_1\cdots \lambda_n,$$ and suppose that $\gamma_{s_1}(\mathbb{K})\cdots \gamma_{s_n}(\mathbb{K}) \overset{mult, 1}{\hookrightarrow} \gamma_s\left(\mathbb{K}; \mathbb{N}^n\right)$. Then 
 $\| I_n\|_{\mathcal{L}_{\gamma_{s, s_1,\dots, s_n}}^m} = 1$.
\end{proposition}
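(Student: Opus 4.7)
The plan is to prove the two inequalities $\|I_n\|_{\mathcal{L}_{\gamma_{s, s_1,\dots, s_n}}^m} \le 1$ and $\|I_n\|_{\mathcal{L}_{\gamma_{s, s_1,\dots, s_n}}^m} \ge 1$ separately; both reduce to direct applications of facts already established in the excerpt.

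For the upper bound, I would take arbitrary sequences $\left(\lambda_j^{(i)}\right)_{j=1}^{\infty} \in \gamma_{s_i}(\mathbb{K})$, $i=1,\dots,n$, and observe that
\begin{equation*}
\left(I_n\left(\lambda_{j_1}^{(1)},\dots,\lambda_{j_n}^{(n)}\right)\right)_{j_1,\dots,j_n=1}^{\infty} = \left(\lambda_{j_1}^{(1)}\cdots\lambda_{j_n}^{(n)}\right)_{j_1,\dots,j_n=1}^{\infty}.
\end{equation*}
The hypothesis $\gamma_{s_1}(\mathbb{K})\cdots\gamma_{s_n}(\mathbb{K}) \overset{mult,1}{\hookrightarrow} \gamma_s(\mathbb{K};\mathbb{N}^n)$ then gives both that this $n$-sequence belongs to $\gamma_s(\mathbb{K};\mathbb{N}^n)$ and that its norm is bounded by $\prod_{i=1}^n \left\|\left(\lambda_j^{(i)}\right)_{j=1}^{\infty}\right\|_{\gamma_{s_i}(\mathbb{K})}$. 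By Proposition \ref{PPP2.2.}, this shows $I_n \in \mathcal{L}_{\gamma_{s,s_1,\dots,s_n}}^m(\mathbb{K}^n;\mathbb{K})$ and, by the infimum characterization of the norm in Proposition \ref{PPP3.3.}(a), yields $\|I_n\|_{\mathcal{L}_{\gamma_{s,s_1,\dots,s_n}}^m} \le 1$.

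For the lower bound, there are two equally short routes. The quickest is to invoke Proposition \ref{PPP3.3.}(b), namely $\|I_n\| \le \|I_n\|_{\mathcal{L}_{\gamma_{s,s_1,\dots,s_n}}^m}$, together with the elementary fact that $\|I_n\|=1$ in $\mathcal{L}(\mathbb{K}^n;\mathbb{K})$ (attained at $(1,\dots,1)$). Alternatively, one can plug the canonical choice $\lambda_j^{(i)} = \delta_{j,1}$ into the defining inequality: each of the $n$ sequences has norm $\|e_1\|_{\gamma_{s_i}(\mathbb{K})}=1$, while
\begin{equation*}
\left(\lambda_{j_1}^{(1)}\cdots\lambda_{j_n}^{(n)}\right)_{j_1,\dots,j_n=1}^{\infty} = e_{1,\dots,1},
\end{equation*}
whose norm in $\gamma_s(\mathbb{K};\mathbb{N}^n)$ equals $1$ by the defining axiom of an $n$-sequence class. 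Either route produces $\|I_n\|_{\mathcal{L}_{\gamma_{s,s_1,\dots,s_n}}^m} \ge 1$ and, combined with the upper bound, the equality.

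There is no serious obstacle here; the whole statement is a bookkeeping consequence of the compatibility hypothesis $\gamma_{s_1}(\mathbb{K})\cdots\gamma_{s_n}(\mathbb{K}) \overset{mult,1}{\hookrightarrow} \gamma_s(\mathbb{K};\mathbb{N}^n)$ on one side and the normalization axiom $\|e_{k_1,\dots,k_n}\|_{\gamma_s(\mathbb{K};\mathbb{N}^n)}=1$ (or Proposition \ref{PPP3.3.}(b)) on the other. The only point requiring any care is to note explicitly that membership in the class and the norm inequality are verified simultaneously by the compatibility hypothesis, so no separate appeal to Theorem \ref{PPP2.2.} in its full finite-sums form is needed.
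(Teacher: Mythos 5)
Your proposal is correct and follows essentially the same route as the paper: the upper bound comes from applying the hypothesis $\gamma_{s_1}(\mathbb{K})\cdots\gamma_{s_n}(\mathbb{K}) \overset{mult,1}{\hookrightarrow} \gamma_s\left(\mathbb{K}; \mathbb{N}^n\right)$ to arbitrary scalar sequences, and the lower bound from Proposition \ref{PPP3.3.}(b) together with $\|I_n\|=1$. Your alternative lower-bound argument via $e_{1,\dots,1}$ and the normalization axiom is a valid extra, but the paper uses the first route.
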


\begin{proof}
  Given $\left(\lambda_{j_i}^{(i)}\right)_{j_i = 1}^{\infty} \in \gamma_{s_i}(\mathbb{K})$, $i=1,\dots, n$, it follows from $\gamma_{s_1}(\mathbb{K})\cdots \gamma_{s_n}(\mathbb{K}) \overset{mult, 1}{\hookrightarrow} \gamma_s\left(\mathbb{K}; \mathbb{N}^n\right)$ that $\left(\lambda_{j_1}^{(1)}\cdots \lambda_{j_n}^{(n)} \right)_{j_1,\dots, j_n=1}^{\infty} \in \gamma_s\left(\mathbb{K}; \mathbb{N}^n\right)$ and
\begin{align*}
\left\|\left(I_n\left(\lambda_{j_1}^{(1)} \cdots \lambda_{j_n}^{(n)}\right) \right)_{j_1 \cdots j_n=1}^{\infty} \right\|_{\gamma_s\left(\mathbb{K}; \mathbb{N}^n\right)} &= \left\| \left(\lambda_{j_1}^{(1)}\cdots \lambda_{j_n}^{(n)} \right)_{j_1,\dots, j_n=1}^{\infty} \right\|_{\gamma_s\left(\mathbb{K}; \mathbb{N}^n\right)} \le \prod_{i=1}^{n}\left\|\left(\lambda_{j}^{(i)} \right)_{j=1}^{\infty} \right\|_{\gamma_{s_i}(\mathbb{K})}.
\end{align*}
 So, $I_n \in \mathcal{L}_{\gamma_{s, s_1,\dots, s_n}}^m(\mathbb{K}^n; \mathbb{K})$ and $\| I_n\|_{\mathcal{L}_{\gamma_{s, s_1,\dots, s_n}}^m} \le 1$. The other inequality follows Proposition \ref{PPP3.3.}(b).
\end{proof}

Just combine Propositions \ref{PPP3.3.}, \ref{PPP3.4.}, \ref{PPP3.5.} and \ref{PPP3.6.} to obtain the following result:

\begin{theorem}\label{TTT3.7.}
Let  $\gamma_{s_1},\dots, \gamma_{s_n}$ be linearly stable, finitely determined sequence classes and $ \gamma_s\left(\cdot; \mathbb{N}^n \right)$ be a linearly stable, finitely determined $n$-sequence classes. Suppose that $\gamma_{s_1}\left(\mathbb{K} \right)\cdot \cdot \cdot \gamma_{s_n}\left(\mathbb{K} \right) \overset{mult, 1}{\hookrightarrow} \gamma_{s}\left(\mathbb{K}; \mathbb{N}^n \right)$. Then $\left(\mathcal{L}_{\gamma_{s, s_1,\dots, s_n}}^m, \| \cdot\|_{\mathcal{L}_{\gamma_{s, s_1,\dots, s_n}}^m} \right)$ is a Banach multi-ideal.
\end{theorem}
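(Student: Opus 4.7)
The plan is essentially a bookkeeping exercise: the four axioms of a Banach multi-ideal (completeness of each component, inclusion of finite type $n$-linear operators, normalization $\|I_n\|=1$, and the multi-ideal property with the associated norm inequality) have each been isolated in one of the preceding propositions of Section 3. So I would simply walk through Definition 1.2 and, for each requirement, cite the result that dispatches it, making sure that the hypotheses of that result are included in the hypotheses of the present theorem.

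First, for arbitrary Banach spaces $E_1,\dots,E_n,F$, Proposition \ref{PPP3.3.}(a) and (c) show that $\mathcal{L}^m_{\gamma_{s,s_1,\dots,s_n}}(E_1,\dots,E_n;F)$ is a subspace of $\mathcal{L}(E_1,\dots,E_n;F)$ and a Banach space under $\|\cdot\|_{\mathcal{L}^m_{\gamma_{s,s_1,\dots,s_n}}}$, while part (b) records $\|T\|\le\|T\|_{\mathcal{L}^m_{\gamma_{s,s_1,\dots,s_n}}}$, so the new norm dominates the operator norm. This takes care of the "normed component" demand of Definition 1.2. Next, the inclusion of $n$-linear operators of finite type is exactly Proposition \ref{PPP3.4.}, whose hypothesis $\gamma_{s_1}(\mathbb{K})\cdots\gamma_{s_n}(\mathbb{K})\overset{mult,1}{\hookrightarrow}\gamma_s(\mathbb{K};\mathbb{N}^n)$ is assumed in the statement of Theorem \ref{TTT3.7.}; by linearity it suffices, as in the proof of that proposition, to handle a single operator $(x_1,\dots,x_n)\mapsto \varphi^{(1)}(x_1)\cdots\varphi^{(n)}(x_n)b$.

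For the identity/normalization axiom, I would invoke Proposition \ref{PPP3.6.} to obtain $I_n\in\mathcal{L}^m_{\gamma_{s,s_1,\dots,s_n}}(\mathbb{K}^n;\mathbb{K})$ with $\|I_n\|_{\mathcal{L}^m_{\gamma_{s,s_1,\dots,s_n}}}\le 1$; the reverse inequality is immediate from Proposition \ref{PPP3.3.}(b), since $\|I_n\|=1$. Finally, the multi-ideal property is Proposition \ref{PPP3.5.}: for $u_i\in\mathcal{L}(G_i;E_i)$, $T\in\mathcal{L}^m_{\gamma_{s,s_1,\dots,s_n}}(E_1,\dots,E_n;F)$ and $t\in\mathcal{L}(F;H)$, that proposition yields both membership of $t\circ T\circ(u_1,\dots,u_n)$ in the class and the norm bound $\|t\|\|T\|_{\mathcal{L}^m_{\gamma_{s,s_1,\dots,s_n}}}\|u_1\|\cdots\|u_n\|$.

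Since no new argument is required beyond assembling these four ingredients, there is no genuine obstacle left at this stage; the conceptual work was done earlier. The only point worth a moment's care, and the place where a hidden constant would creep in if one were not careful, is the norm equality $\|\hat u:\gamma_{s_i}(G_i)\to\gamma_{s_i}(E_i)\|=\|u_i\|$ (and likewise for $\hat t$), which is built into the definitions of linear stability for sequence and $n$-sequence classes adopted in Section 3. With those equalities in hand, the constants in the multi-ideal inequality of Proposition \ref{PPP3.5.} are sharp, and Definition 1.2 is verified in full.
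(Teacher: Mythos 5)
Your proposal is correct and is exactly the paper's argument: the paper's proof of Theorem \ref{TTT3.7.} simply combines Propositions \ref{PPP3.3.}, \ref{PPP3.4.}, \ref{PPP3.5.} and \ref{PPP3.6.}, which is the assembly you carried out. Your additional remark that the sharpness of the constants rests on the norm equality in the definition of linear stability is a fair observation but introduces nothing beyond what those propositions already establish.
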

It is worth mentioning that we used the condition $\gamma_{s_1}\left(\mathbb{K} \right)\cdot \cdot \cdot \gamma_{s_n}\left(\mathbb{K} \right) \overset{mult, 1}{\hookrightarrow} \gamma_{s}\left(\mathbb{K}; \mathbb{N}^n \right)$ twice, in the proofs of Propositions \ref{PPP3.5.} and \ref{PPP3.6.}.  
We also note that this condition 
is not restrictive, since the main classes known in the literature enjoy this property.

In order to compare the class we are studying with the class of absolutely   $\gamma_{s, s_1,\dots, s_n}$-summing multilinear operators at the origin $a = (0,\dots, 0) \in E_1\times \cdots \times E_n$, which was introduced and denoted by $\prod_{\gamma_{s, s_1,\dots, s_n}}(E_1,\dots, E_n; F)$ in  \cite{S13}, we need the following definition.


\begin{definition}\label{DDD.2.10.} For every $n \in \mathbb{N}$, let an $n$-sequence class $\gamma_s\left(\cdot; \mathbb{N}^n \right)$ be given. 
We say that the $n$-sequence class $\gamma_s\left(\cdot; \mathbb{N}^n \right)$ is sequentially compatible with $\gamma_s(\cdot) := \gamma_s\left(\cdot; \mathbb{N}^1 \right)$ if, for every $(x_{j_1,\dots, j_n})_{j_1,\dots, j_n=1}^{\infty} \in \gamma_s\left(E; \mathbb{N}^n \right)$, we have $(x_j)_{j=1}^{\infty} := (x_{j,\dots, j})_{j=1}^{\infty} \in \gamma_s(E)$ and
\begin{equation*}
\left\|(x_{j})_{j=1}^{\infty} \right\|_{\gamma_s(E)} \le \left\|(x_{j_1,\dots, j_n})_{j_1,\dots, j_n = 1}^{\infty} \right\|_{\gamma_s\left(E; \mathbb{N}^n \right)}.
\end{equation*}
\end{definition}

\begin{example}\label{EEEE.2.11.}
All classes mentioned in Example \ref{ExSc} satisfy Definition \ref{DDD.2.10.}.
\end{example}

\begin{proposition}
	Let $E_1,\dots, E_n, F$ be Banach spaces and suppose that $\gamma_s\left(\cdot; \mathbb{N}^n \right)$ sequentially compatible with $\gamma_s(\cdot)$. Then $\mathcal{L}_{\gamma_{s, s_1,\dots, s_n}}^m(E_1,\dots, E_n; F) \subset \prod_{\gamma_{s, s_1,\dots, s_n}}(E_1,\dots, E_n; F)$ and
		\begin{equation*}
	\pi_{\gamma_{s, s_1,\dots, s_n}}(T) \le \|T\|_{\mathcal{L}_{\gamma_{s, s_1,\dots, s_n}}^m}
	\end{equation*}
	for any $T \in \mathcal{L}_{\gamma_{s, s_1,\dots, s_n}}^m(E_1,\dots, E_n; F)$.
\end{proposition}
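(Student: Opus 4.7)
The plan is to unwind the definitions: a multiple $\gamma_{s,s_1,\dots,s_n}$-summing operator sends tuples of sequences in $\gamma_{s_i}(E_i)$ to $n$-sequences in $\gamma_s(F;\mathbb{N}^n)$, while being $\gamma_{s,s_1,\dots,s_n}$-summing at the origin (in the sense of \cite{S13}) requires only that the diagonal sequence $(T(x_j^{(1)},\dots,x_j^{(n)}))_{j=1}^{\infty}$ lies in $\gamma_s(F)$, with the appropriate norm control. Sequential compatibility is precisely the bridge between these two conditions.

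First I would fix $T \in \mathcal{L}_{\gamma_{s,s_1,\dots,s_n}}^m(E_1,\dots,E_n;F)$ and arbitrary $(x_j^{(i)})_{j=1}^{\infty} \in \gamma_{s_i}(E_i)$ for $i=1,\dots,n$. By Definition \ref{MSO}, the full $n$-sequence
\begin{equation*}
\left(T\left(x_{j_1}^{(1)},\dots,x_{j_n}^{(n)}\right)\right)_{j_1,\dots,j_n=1}^{\infty}
\end{equation*}
belongs to $\gamma_s(F;\mathbb{N}^n)$, and by Theorem \ref{PPP2.2.} its norm in $\gamma_s(F;\mathbb{N}^n)$ is bounded by $\|T\|_{\mathcal{L}_{\gamma_{s,s_1,\dots,s_n}}^m}\prod_{i=1}^{n}\|(x_j^{(i)})_{j=1}^{\infty}\|_{\gamma_{s_i}(E_i)}$.

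Next I invoke the sequential compatibility hypothesis of Definition \ref{DDD.2.10.} with the $E$-valued $n$-sequence $x_{j_1,\dots,j_n}:=T(x_{j_1}^{(1)},\dots,x_{j_n}^{(n)})$ (taking $E=F$). This immediately yields that the diagonal sequence
\begin{equation*}
\left(T\left(x_j^{(1)},\dots,x_j^{(n)}\right)\right)_{j=1}^{\infty}
\end{equation*}
is in $\gamma_s(F)$, proving that $T \in \prod_{\gamma_{s,s_1,\dots,s_n}}(E_1,\dots,E_n;F)$, and moreover its $\gamma_s(F)$-norm is controlled by the $\gamma_s(F;\mathbb{N}^n)$-norm of the full $n$-sequence.

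Chaining the two inequalities gives
\begin{equation*}
\left\|\left(T(x_j^{(1)},\dots,x_j^{(n)})\right)_{j=1}^{\infty}\right\|_{\gamma_s(F)} \le \|T\|_{\mathcal{L}_{\gamma_{s,s_1,\dots,s_n}}^m}\prod_{i=1}^{n}\left\|\left(x_j^{(i)}\right)_{j=1}^{\infty}\right\|_{\gamma_{s_i}(E_i)},
\end{equation*}
and taking the infimum over the admissible constants in the definition of $\pi_{\gamma_{s,s_1,\dots,s_n}}(T)$ (from \cite{S13}) yields $\pi_{\gamma_{s,s_1,\dots,s_n}}(T) \le \|T\|_{\mathcal{L}_{\gamma_{s,s_1,\dots,s_n}}^m}$. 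There is no real obstacle here; the whole argument is a direct application of Theorem \ref{PPP2.2.}(b) followed by the compatibility inequality, and the only subtlety is to make sure that the norm defining $\pi_{\gamma_{s,s_1,\dots,s_n}}$ from \cite{S13} is indeed the infimum of constants satisfying exactly the diagonal inequality above, which is a standard fact for classes of summing operators.
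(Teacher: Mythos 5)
Your proposal is correct and follows essentially the same route as the paper's proof: apply the defining inequality of Theorem \ref{PPP2.2.}(b) to the full $n$-sequence, use sequential compatibility (Definition \ref{DDD.2.10.}) to pass to the diagonal sequence with the norm inequality, and chain the two estimates to bound $\pi_{\gamma_{s,s_1,\dots,s_n}}(T)$. Your write-up is in fact slightly more careful than the paper's, which leaves the compatibility inequality implicit in the displayed estimate.
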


\begin{proof}
	Given $T \in \mathcal{L}_{\gamma_{s, s_1,\dots, s_n}}^m(E_1,\dots, E_n; F)$ and $\left(x_j^{(i)} \right)_{j=1}^{\infty} \in \gamma_{s_i}(E_i), i=1,\dots, n$, it follows directly from Definition \ref{DDD.2.10.}  that $\left(T\left(x_{j}^{(1)},\dots, x_j^{(n)} \right) \right)_{j=1}^{\infty} \in \gamma_s(F) $, which gives $T \in \prod_{\gamma_{s, s_1,\dots, s_n}}(E_1,\dots, E_n; F)$, and
	\begin{equation*}
	\left\|\left(T\left(x_j^{(1)},\dots, x_j^{(n)} \right) \right)_{j=1}^{\infty} \right\|_{\gamma_s\left(F; \mathbb{N}^n\right)} \le \|T\|_{\mathcal{L}_{\gamma_{s, s_1,\dots, s_n}}^m} \prod_{i=1}^n\left\|\left(x_j^{(i)} \right)_{j=1}^{\infty} \right\|_{\gamma_{s_i}(E_i)},
	\end{equation*}
 which proves that $\pi_{\gamma_{s, s_1,\dots, s_n}}(T) \le \|T\|_{\mathcal{L}_{\gamma_{s, s_1,\dots, s_n}}^m}$.
\end{proof}

We finish this section giving another consequence of definition above.

\begin{proposition}
Let  $\gamma_{s_1},\dots, \gamma_{s_n}$ be finitely determined sequence classes, $\gamma_s\left(\cdot; \mathbb{N}^n \right)$ be a finitely determined $n$-sequence class and suppose that $\gamma_s\left(\cdot; \mathbb{N}^n \right)$ sequentially compatible with $\gamma_s(\cdot)$. \,If $\gamma_{s_1}(\mathbb{K})\cdots \gamma_{s_n}(\mathbb{K}) \overset{mult, 1}{\hookrightarrow} \gamma_s\left(\mathbb{K}; \mathbb{N}^n\right)$, then $\gamma_{s_1}(\mathbb{K})\cdots \gamma_{s_n}(\mathbb{K}) \overset{1}{\hookrightarrow} \gamma_s(\mathbb{K})$.

\end{proposition}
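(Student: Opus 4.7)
The plan is to combine the two hypotheses directly, since each of them yields precisely one of the two pieces needed for the conclusion. Fix scalar sequences $(\lambda_j^{(i)})_{j=1}^{\infty} \in \gamma_{s_i}(\mathbb{K})$ for $i = 1, \ldots, n$.

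First I invoke the hypothesis $\gamma_{s_1}(\mathbb{K})\cdots \gamma_{s_n}(\mathbb{K}) \overset{mult, 1}{\hookrightarrow} \gamma_s\left(\mathbb{K}; \mathbb{N}^n\right)$ to conclude that the $n$-sequence $(\lambda_{j_1}^{(1)}\cdots \lambda_{j_n}^{(n)})_{j_1,\ldots,j_n=1}^{\infty}$ lies in $\gamma_s\left(\mathbb{K}; \mathbb{N}^n\right)$ with
\begin{equation*}
\left\|(\lambda_{j_1}^{(1)}\cdots \lambda_{j_n}^{(n)})_{j_1,\ldots,j_n=1}^{\infty}\right\|_{\gamma_s\left(\mathbb{K}; \mathbb{N}^n\right)} \le \prod_{i=1}^{n}\left\|(\lambda_j^{(i)})_{j=1}^{\infty}\right\|_{\gamma_{s_i}(\mathbb{K})}.
\end{equation*}

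Next I apply sequential compatibility (Definition \ref{DDD.2.10.}) with $E = \mathbb{K}$ to the above $n$-sequence. Setting $j_1 = j_2 = \cdots = j_n = j$, the resulting diagonal sequence is exactly $(\lambda_j^{(1)}\cdots \lambda_j^{(n)})_{j=1}^{\infty}$, which therefore belongs to $\gamma_s(\mathbb{K})$ and satisfies
\begin{equation*}
\left\|(\lambda_j^{(1)}\cdots \lambda_j^{(n)})_{j=1}^{\infty}\right\|_{\gamma_s(\mathbb{K})} \le \left\|(\lambda_{j_1}^{(1)}\cdots \lambda_{j_n}^{(n)})_{j_1,\ldots,j_n=1}^{\infty}\right\|_{\gamma_s\left(\mathbb{K}; \mathbb{N}^n\right)}.
\end{equation*}

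Chaining the two inequalities yields $\gamma_{s_1}(\mathbb{K})\cdots \gamma_{s_n}(\mathbb{K}) \overset{1}{\hookrightarrow} \gamma_s(\mathbb{K})$, which is exactly the claim. There is no real obstacle to overcome: the two hypotheses are tailored so that one provides the $n$-dimensional estimate and the other transfers it to the diagonal, and neither the finite-determinedness assumption nor any deeper structural property of the sequence classes is actually needed in the argument.
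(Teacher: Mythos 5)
Your proof is correct and is essentially identical to the paper's: both apply the $\overset{mult,1}{\hookrightarrow}$ hypothesis to place the $n$-sequence $(\lambda_{j_1}^{(1)}\cdots\lambda_{j_n}^{(n)})$ in $\gamma_s(\mathbb{K};\mathbb{N}^n)$ with the product norm bound, then use sequential compatibility to pass to the diagonal and chain the two inequalities. Your closing observation that finite determinedness plays no role in the argument is also accurate.
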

\begin{proof}
	Given $\left(\lambda_j^{(i)} \right)_{j=1}^{\infty} \in \gamma_{s_i}(\mathbb{K})$, $i=1,\dots, n$, by Definition \ref{DDD.2.10.} we have $\left(\lambda_j^{(1)}\cdots \lambda_j^{(n)} \right)_{j=1}^{\infty} \in \gamma_s(\mathbb{K})$ and
	
	\begin{equation*}
	\left\|\left(\lambda_{j}^{(1)}\cdots \lambda_{j}^{(n)} \right)_{j=1}^{\infty} \right\|_{\gamma_s(\mathbb{K})} \le \left\|\left(\lambda_{j_1}^{(1)}\cdots \lambda_{j_n}^{(n)} \right)_{j_1,\dots, j_n=1}^{\infty} \right\|_{\gamma_s\left(\mathbb{K}; \mathbb{N}^n\right)} \le \prod_{i=1}^{\infty}\left\|\left(\lambda_j^{(i)} \right)_{j=1}^{\infty} \right\|_{\gamma_{s_i}(\mathbb{K})}.
	\end{equation*}
\end{proof}

\section{Coherence and compatibility}\label{Sec4}

The concept of coherence and compatibility was introduced in the literature initially by D. Pellegrino and G. Botelho in \cite{30, 31} (with a different terminology) and also was studied by D. Carando, V. Dimant and S. Muro in \cite{27, 29, 28}. The terms "coherence and compatibility" were coined in \cite{27}. In \cite{13} it was introduced a new approach to ''coherence and compatibility'', which considers the sequence formed by the pairs of ideals of multilinear applications and homogeneous polynomials. For this reason, some information about homogeneous polynomials is needed. 

The class of all continuous homogeneous polynomials between Banach spaces is denoted by $\cal P$. Given an $n$-homogeneous polynomial $P \colon E \longrightarrow F$, by $\check P$ we denote the unique symmetric continuous $n$-linear operator associated to $P$. For any unexplained notation about polynomials we refer to \cite{B05, 13}. The next result is folklore.

\begin{proposition}\label{ppp}
Let $\left(\mathcal{M}, \|\cdot\|_{\mathcal{M}}\right)$ be a Banach ideal of multilinear operators. Then, the class
\begin{equation*}
\mathcal{P}_{\mathcal{M}} := \left\{P \in \mathcal{P}; \check{P} \in \mathcal{M} \right\}~,~\|P\|_{\mathcal{P}_{\mathcal{M}}} := \|\check{P}\|_{\mathcal{M}},
\end{equation*}
is a Banach ideal of homogeneous polynomials.
\end{proposition}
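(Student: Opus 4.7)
The plan is to systematically transfer each defining property of a multi-ideal through the polarization correspondence $P \longleftrightarrow \check{P}$, which for fixed $n$ and Banach spaces $E,F$ is a linear bijection between continuous $n$-homogeneous polynomials $P \colon E \to F$ and symmetric continuous $n$-linear operators $\check{P} \colon E^n \to F$. Since $\mathcal{P}_{\mathcal{M}}$ is defined by pulling back membership in $\mathcal{M}$ through this bijection, each axiom of a Banach ideal of polynomials (subspace with complete norm, containment of finite-type polynomials, normalization of the $n$-homogeneous identity on $\mathbb{K}$, and ideal inequality) reduces to the corresponding fact for $\mathcal{M}$. I would verify them in this order.

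For the subspace and norm structure, since $P \mapsto \check{P}$ is linear and injective, the set $\mathcal{P}_{\mathcal{M}}(E;F)$ is the preimage of the intersection of $\mathcal{M}(E,\dots,E;F)$ with the symmetric $n$-linear operators, hence a linear subspace of the $n$-homogeneous polynomials, and $\|P\|_{\mathcal{P}_{\mathcal{M}}} := \|\check{P}\|_{\mathcal{M}}$ is a norm. Completeness follows from completeness of $\mathcal{M}$: given a Cauchy sequence $(P_k)$ in $\mathcal{P}_{\mathcal{M}}(E;F)$, the sequence $(\check{P_k})$ is Cauchy in $\mathcal{M}(E,\dots,E;F)$ and so converges there in the $\mathcal{M}$-norm to some $T$. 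The standard inequality $\|\cdot\| \leq \|\cdot\|_{\mathcal{M}}$ in a Banach multi-ideal forces $\check{P_k} \to T$ also in operator norm and hence pointwise, so $T$ inherits the symmetry of the $\check{P_k}$. Defining $P(x) := T(x,\dots,x)$ gives $\check{P} = T$ and $P_k \to P$ in $\mathcal{P}_{\mathcal{M}}$.

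The remaining three conditions each follow from a short polarization computation. A finite-type $n$-homogeneous polynomial $P(x) = \sum_{i=1}^m \varphi_i(x)^n b_i$ has $\check{P}(x_1,\dots,x_n) = \sum_{i=1}^m \varphi_i(x_1)\cdots\varphi_i(x_n)\, b_i$, which is a finite-type $n$-linear operator and hence belongs to $\mathcal{M}$ by axiom (1) of a multi-ideal. The $n$-homogeneous identity $I_n(\lambda) = \lambda^n$ on $\mathbb{K}$ has $\check{I_n}(\lambda_1,\dots,\lambda_n) = \lambda_1\cdots\lambda_n$, the multilinear identity of axiom (2), whose $\mathcal{M}$-norm equals $1$, yielding $\|I_n\|_{\mathcal{P}_{\mathcal{M}}} = 1$. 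Finally, for $P \in \mathcal{P}_{\mathcal{M}}(E;F)$, $u \in \mathcal{L}(G;E)$, and $t \in \mathcal{L}(F;H)$, the key identity is that the symmetric multilinear associated to $t \circ P \circ u$ equals $t \circ \check{P} \circ (u,\dots,u)$, so the multi-ideal inequality of axiom (3) directly gives $\|t \circ P \circ u\|_{\mathcal{P}_{\mathcal{M}}} \leq \|t\|\,\|P\|_{\mathcal{P}_{\mathcal{M}}}\,\|u\|^n$.

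There is no serious obstacle here, which is precisely why the result is folklore: every step is a direct transfer through polarization. The only point requiring a moment of care is the completeness step, where one must argue that the symmetric part of $\mathcal{M}(E,\dots,E;F)$ is $\|\cdot\|_{\mathcal{M}}$-closed; this is handled exactly as indicated by using $\|\cdot\| \leq \|\cdot\|_{\mathcal{M}}$ to upgrade $\mathcal{M}$-convergence to operator-norm convergence, under which symmetry is preserved.
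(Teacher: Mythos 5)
Your proof is correct. Note that the paper itself gives no argument for this proposition --- it is explicitly labeled folklore and stated without proof --- so there is nothing to compare against; your polarization-transfer argument (linearity and injectivity of $P \mapsto \check{P}$, completeness of the components, $\check{P}$ of a finite-type polynomial being a finite-type multilinear operator, $\check{I_n}$ being the multilinear $I_n$ of axiom (2), and the identity $(t \circ P \circ u)^{\vee} = t \circ \check{P} \circ (u, \dots, u)$ giving the ideal inequality) is exactly the standard one. The only step deserving a word of justification is the inequality $\|T\| \le \|T\|_{\mathcal{M}}$, which you invoke as standard but which is not among the axioms in the paper's definition of a Banach multi-ideal; it does follow from those axioms in the usual way: for unit vectors $x_i$ and $\varphi \in B_{F'}$, the form $\varphi \circ T \circ (u_1, \dots, u_n)$ with $u_i(\lambda) = \lambda x_i$ equals $\varphi\left(T(x_1, \dots, x_n)\right) I_n$, so axioms (2) and (3) yield $|\varphi(T(x_1, \dots, x_n))| \le \|T\|_{\mathcal{M}}$, and taking suprema gives the claim. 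With that remark inserted, the completeness step (closedness of the symmetric operators under $\mathcal{M}$-convergence, hence under operator-norm and pointwise convergence) is airtight, and the remaining verifications are exactly the short polarization computations you describe.
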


The Banach ideal $\left(\mathcal{P}_{\mathcal{M}}, \|\cdot\|_{\mathcal{P}_{\mathcal{M}}}\right)$ of homogeneous polynomials is called the Banach ideal of polynomials generated by $\left(\mathcal{M}, \|\cdot\|_{\mathcal{M}}\right)$. This class has been studied extensively in several works, of which we highlight \cite{B05, 16, 17}.

In this section, the class of the multiple $\gamma_{s, s_1, \ldots, s_1}$-summing $n$-linear operators shall be denoted by $\mathcal{L}_{\gamma_{s, s_1}}^{m, n}$. 




\begin{definition}\label{DDD4.1.}
Given $E$ and $F$ Banach spaces, the class of the multiple $\gamma_{s, s_1}$-summing $n$-homogeneos polynomials is defined by
\begin{equation*}
\mathcal{P}_{\gamma_{s, s_i}}^{m, n} :=\mathcal{P}_{\mathcal{L}_{\gamma_{s, s_i}}^{m, n}} := \left\{P \in \mathcal{P}; \check{P} \in \mathcal{L}_{\gamma_{s, s_i}}^{m, n} \right\}.
\end{equation*}
\end{definition}



Since $\left(\mathcal{L}_{\gamma_{s, s_1}}^{m, n} ,\left\|\cdot \right\|_{\mathcal{L}_{\gamma_{s, s_1}}^{m,n}}\right)$ is Banach ideal of multilinear operators, from Proposition \ref{ppp} we have the following result.

\begin{corollary}
Let $E$ and $F$ be Banach spaces. Then $\left(\mathcal{P}_{\gamma_{s, s_1}}^{m, n}, \left\|P \right\|_{\mathcal{P}_{\gamma_{s, s_1}}^{m, n}} \right)$ is a Banach ideal of $n$-homogeneous polynomials endowed with the norm

\begin{equation*}
\left\|P \right\|_{\mathcal{P}_{\gamma_{s, s_1}}^{m, n}} := \left\|\check{P} \right\|_{\mathcal{L}_{\gamma_{s, s_1}}^{m,n}}.
\end{equation*}

\end{corollary}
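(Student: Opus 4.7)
The statement is essentially a direct application of Proposition \ref{ppp} to a specific Banach multi-ideal we have already constructed, so the plan is very short. The main work has already been done earlier in the paper: Theorem \ref{TTT3.7.} establishes that, under the standing hypotheses on $\gamma_{s_1},\dots,\gamma_{s_n}$ and $\gamma_s(\cdot;\mathbb{N}^n)$, the pair $\bigl(\mathcal{L}_{\gamma_{s,s_1}}^{m,n},\|\cdot\|_{\mathcal{L}_{\gamma_{s,s_1}}^{m,n}}\bigr)$ (i.e.\ the class $\mathcal{L}_{\gamma_{s,s_1,\dots,s_1}}^{m}$ in the notation of Section~3, specialized so that all the factor classes coincide) is a Banach ideal of $n$-linear operators.

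My plan is therefore to simply invoke Proposition \ref{ppp} with $\mathcal{M}=\mathcal{L}_{\gamma_{s,s_1}}^{m,n}$. By Definition \ref{DDD4.1.}, the class $\mathcal{P}_{\gamma_{s,s_1}}^{m,n}$ is precisely the class $\mathcal{P}_{\mathcal{M}}$ generated by this multi-ideal, and the norm $\|P\|_{\mathcal{P}_{\gamma_{s,s_1}}^{m,n}}:=\|\check{P}\|_{\mathcal{L}_{\gamma_{s,s_1}}^{m,n}}$ coincides with the norm $\|\cdot\|_{\mathcal{P}_{\mathcal{M}}}$ of Proposition \ref{ppp}. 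Hence $\bigl(\mathcal{P}_{\gamma_{s,s_1}}^{m,n},\|\cdot\|_{\mathcal{P}_{\gamma_{s,s_1}}^{m,n}}\bigr)$ inherits the structure of a Banach ideal of $n$-homogeneous polynomials.

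There is no real obstacle here; the only point requiring any care is verifying that the hypotheses of Proposition \ref{ppp} are genuinely satisfied by $\mathcal{L}_{\gamma_{s,s_1}}^{m,n}$, which is exactly the content of Theorem \ref{TTT3.7.} (combined with the standing assumption $\gamma_{s_1}(\mathbb{K})\cdots\gamma_{s_1}(\mathbb{K})\overset{mult,1}{\hookrightarrow}\gamma_s(\mathbb{K};\mathbb{N}^n)$). Thus the entire proof reduces to a one-line citation, and I would write it as: ``By Theorem \ref{TTT3.7.}, $\bigl(\mathcal{L}_{\gamma_{s,s_1}}^{m,n},\|\cdot\|_{\mathcal{L}_{\gamma_{s,s_1}}^{m,n}}\bigr)$ is a Banach multi-ideal; the conclusion now follows from Proposition \ref{ppp} together with Definition \ref{DDD4.1.}.''
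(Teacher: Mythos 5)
Your proposal is correct and follows exactly the paper's own route: the paper derives this corollary by noting that $\left(\mathcal{L}_{\gamma_{s, s_1}}^{m, n},\|\cdot\|_{\mathcal{L}_{\gamma_{s, s_1}}^{m,n}}\right)$ is a Banach multi-ideal (Theorem \ref{TTT3.7.}) and then applying Proposition \ref{ppp} together with Definition \ref{DDD4.1.}. Nothing further is needed.
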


The definition below shall prove to be the correct condition the sequence classes should satisfy for coherence to hold.

\begin{definition} A sequence $\left(\gamma_n\left(\cdot; \mathbb{N}^{n} \right)\right)_{n=1}^{M}$, where $M \in \mathbb{N}\cup \{\infty \}$ and each $\gamma_n\left(\cdot; \mathbb{N}^{n}\right)$ is an $n$-sequence class, is said to be:\\
(a) {\bf multiple regular} with the sequence class $ \gamma_{s_i} $ if the following condition holds: if $\left(\lambda_{j} \right)_{j=1}^{\infty} \in \gamma_{s_i}(\mathbb{K})$, $i=1,\dots, n$,  and $\left(a_{j_1,\dots, j_{i-1}, j_{i+1},\dots, j_n}\right)_{j_1,\dots, j_{i-1}, j_{i+1},\dots, j_n=1}^{\infty}  \in \gamma_{n-1}\left(F; \mathbb{N}^{n-1}\right)$, regardless of the Banach space $F$, then $\left(\lambda_{j_{i}}a_{j_1,\dots, j_{i-1}, j_{i+1},\dots, j_n}\right)_{j_1,\dots, j_{n}=1}^{\infty} \in \gamma_n\left(F; \mathbb{N}^{n}\right)$ and
\begin{align*}
&\left\|\left(\lambda_{j_{i}}a_{j_1,\dots, j_{i-1}, j_{i+1},\dots, j_n}\right)_{j_1,\dots, j_{n}=1}^{\infty} \right\|_{\gamma_{n}\left(F; \mathbb{N}^{n}\right)} \\
&\le \left\|\left(\lambda_{j} \right)_{j=1}^{\infty} \right\|_{\gamma_{s_i}(\mathbb{K})} \left\| \left(a_{j_1,\dots, j_{i-1}, j_{i+1},\dots, j_{n}}\right)_{j_1,\dots, j_{i-1}, j_{i+1},\dots, j_{n}=1}^{\infty} \right\|_{\gamma_{n-1}\left(F; \mathbb{N}^{n-1}\right)}.
\end{align*}
(b) 
{\bf down regular} if, for any Banach space $E$ and every $(x_{j_1,\dots, j_n})_{j_1,\dots, j_n=1}^{\infty} \in \gamma_n\left(E; \mathbb{N}^n\right)$ with $n \ge 2$, for any fixed $j_i$,  $i=1,\dots, n$, it holds that $(x_{j_1,\dots, j_n})_{j_1,\dots, j_{i-1}, j_{i+1},\dots, j_n = 1}^{\infty} \in \gamma_{n-1}\left(E; \mathbb{N}^{n-1}\right)$ and
\begin{align*}
&\left\|(x_{j_1,\dots, j_n})_{j_1,\dots, j_{i-1}, j_{i+1},\dots, j_n = 1}^{\infty} \right\|_{\gamma_{n-1}\left(E; \mathbb{N}^{n-1}\right)} \le \left\|(x_{j_1,\dots, j_n})_{j_1,\dots, j_n=1}^{\infty} \right\|_{\gamma_n\left(E; \mathbb{N}^n\right)}.
\end{align*}
\end{definition}

\begin{example}\label{Exemple4.6}
	All classes presented in Example \ref{ExSc} are multiple regular and down regular.
\end{example}

Besides of guaranteeing coherence, as we shall prove soon, the definitions above avoid artificial sequences of $n$-sequences, as the following example illustrates.

\begin{example}
	Let $\gamma_n\left(\cdot; \mathbb{N}^n \right)$ be defined by: $\gamma_n\left(\cdot; \mathbb{N}^n \right):=\ell_p^w\left(\cdot; \mathbb{N}^n\right)$ if $n$ is even and $\gamma_n\left(\cdot; \mathbb{N}^n \right):=\ell_p\left(\cdot; \mathbb{N}^n\right)$ if $n$ is odd. It is plain that the sequence $\left(\gamma_n\left(\cdot; \mathbb{N}^n \right) \right)_{n=1}^{M}$ is neither multiple regular nor down regular.
\end{example}

A classic result that will be important to  next Theorem is as following.

\begin{lemma}\label{LLL4.4.}
Let $P \in \mathcal{P}(^nE; F)$ and $a \in E$. Then $(P_a)^{\vee} = \check{P}_a$.
\end{lemma}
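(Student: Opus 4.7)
The plan is to invoke the uniqueness part of the polarization formula: every continuous $m$-homogeneous polynomial has exactly one associated symmetric continuous $m$-linear operator. So to prove $(P_a)^{\vee}=\check{P}_a$ I just need to verify that $\check{P}_a$ is a symmetric $(n-1)$-linear map whose diagonal reproduces the polynomial $P_a$.

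First I would recall the definitions: for $P\in\mathcal{P}(^n E;F)$ and $a\in E$, the standard notation (see, e.g., \cite{B05}) is
\[
P_a(x):=\check{P}(a,x,\ldots,x)\quad(x\text{ appearing }n-1\text{ times}),
\]
so that $P_a\in\mathcal{P}(^{n-1}E;F)$, while $\check{P}_a$ denotes the $(n-1)$-linear operator
\[
\check{P}_a(x_1,\ldots,x_{n-1}):=\check{P}(a,x_1,\ldots,x_{n-1}).
\]

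Next I would observe two things. (i) $\check{P}_a$ is $(n-1)$-linear because $\check{P}$ is $n$-linear, and it is symmetric in $x_1,\ldots,x_{n-1}$ since $\check{P}$ is symmetric in all $n$ variables. (ii) The diagonal of $\check{P}_a$ equals $P_a$: indeed, for every $x\in E$,
\[
\check{P}_a(x,\ldots,x)=\check{P}(a,x,\ldots,x)=P_a(x).
\]

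Finally, by the uniqueness of the symmetric continuous $(n-1)$-linear operator associated to an $(n-1)$-homogeneous polynomial, we conclude $(P_a)^{\vee}=\check{P}_a$. There is no real obstacle here: the only point requiring care is the bookkeeping of which variable of $\check{P}$ is being frozen at $a$, which is harmless thanks to the symmetry of $\check{P}$.
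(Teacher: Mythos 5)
Your argument is correct: once one checks that $\check{P}_a$ is a continuous symmetric $(n-1)$-linear map whose restriction to the diagonal is $P_a$, the identity $(P_a)^{\vee}=\check{P}_a$ follows from the uniqueness of the symmetric continuous multilinear operator associated to a homogeneous polynomial, and the symmetry of $\check{P}_a$ in the remaining variables is indeed immediate from the full symmetry of $\check{P}$. The paper states this lemma as a classic result and gives no proof, so there is nothing to compare against; your write-up is the standard folklore argument and fills that gap correctly.
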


The main result of this sections reads as follows.

\begin{theorem}\label{ttt}
Let $\gamma_{s_i}$ be a finitely determined and linearly stable sequence class and, for every $n \in \mathbb{N}$, let $\gamma_{s}\left(\cdot, \mathbb{N}^n \right)$ be a finitely determined and linearly stable $n$-sequence class. Suppose that the sequence of $n$-sequence classes $\left(\gamma_s\left(\cdot; \mathbb{N}^{n} \right)\right)_{n=1}^{\infty}$ is multiple regular with $ \gamma_{s_i} $ and down regular. Then the sequence of pairs $$\left(\left( \mathcal{L}_{\gamma_{s, s_1,\dots,s_n}}^{m, n}, \| \cdot \|_{\mathcal{L}_{\gamma_{s, s_1,\dots,s_n}}^{m, n}} \right); \left(\mathcal{P}_{\gamma_{s, s_i}}^{m,n}, \| \cdot\|_{\mathcal{P}_{\gamma_{s, s_i}}^{m,n}} \right) \right)_{n=1}^{\infty}$$ is coherent and compatible with $\mathcal{L}_{\gamma_{s, s_i}}$ in the sense of \cite{13}.
\end{theorem}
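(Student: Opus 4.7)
The plan is to verify, one by one, each of the axioms defining coherence and compatibility of a sequence of pairs (multi-ideal, polynomial ideal) with a linear ideal, as formulated in \cite{13}. These split into four families: (i) the base case $n=1$ matching $\mathcal{L}_{\gamma_{s,s_i}}$; (ii) the polynomial ideal $\mathcal{P}^{m,n}_{\gamma_{s,s_i}}$ being the one generated by $\mathcal{L}^{m,n}_{\gamma_{s,s_1}}$; (iii) ``fixing one coordinate'' dropping degree from $n$ to $n-1$; and (iv) ``multiplying by a continuous linear functional'' raising degree from $n$ to $n+1$. Item (i) is immediate after unfolding Definition \ref{MSO} at $n=1$ and identifying $\gamma_s(\cdot;\mathbb{N}^1)$ with $\gamma_s(\cdot)$; item (ii) is built into Definition \ref{DDD4.1.} and Proposition \ref{ppp}.

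For the ``fixing a coordinate'' axioms in (iii), given $T \in \mathcal{L}^{m,n}_{\gamma_{s,s_1}}(E_1,\dots,E_n;F)$, $a \in E_k$, and test sequences $(x_j^{(i)}) \in \gamma_{s_i}(E_i)$ for $i \neq k$, I would plug $a$ into the $k$-th slot by choosing the auxiliary sequence $x_j^{(k)} := a$ if $j=1$ and $0$ otherwise. This sequence belongs to $c_{00}(E_k) \subset \gamma_{s_k}(E_k)$ with norm exactly $\|a\|$ (via linear stability of $\gamma_{s_k}$ applied to the map $\lambda \mapsto \lambda a$ acting on $e_1$). The $n$-sequence $(T(x_{j_1}^{(1)},\dots,x_{j_n}^{(n)})) \in \gamma_s(F;\mathbb{N}^n)$ is then supported on $j_k = 1$, and down regularity extracts the $j_k = 1$ slice as an $(n-1)$-sequence in $\gamma_s(F;\mathbb{N}^{n-1})$, with the bound $\|T_a\|_{\mathcal{M}_{n-1}} \le \|a\| \cdot \|T\|_{\mathcal{M}_n}$. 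The symmetry hypothesis on $\gamma_s(\cdot;\mathbb{N}^n)$ ensures this works for any choice of $k$. The polynomial analogue follows from Lemma \ref{LLL4.4.}: $(P_a)^{\vee} = \check{P}_a$ shows $P_a \in \mathcal{P}^{m,n-1}_{\gamma_{s,s_i}}$ with $\|P_a\|_{\mathcal{P}^{m,n-1}} \le \|a\| \cdot \|P\|_{\mathcal{P}^{m,n}}$.

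For the ``multiplication'' axioms in (iv), given $T \in \mathcal{L}^{m,n}_{\gamma_{s,s_1}}(E_1,\dots,E_n;F)$ and $\varphi \in E_{n+1}'$, I would consider $\varphi \otimes T \in \mathcal{L}(E_1,\dots,E_{n+1};F)$ defined by $(\varphi \otimes T)(x_1,\dots,x_{n+1}) := \varphi(x_{n+1})\,T(x_1,\dots,x_n)$. For test sequences $(x_j^{(i)}) \in \gamma_{s_i}(E_i)$, $i = 1,\dots,n+1$, linear stability of $\gamma_{s_{n+1}}$ gives $(\varphi(x_j^{(n+1)})) \in \gamma_{s_{n+1}}(\mathbb{K})$ with norm at most $\|\varphi\|\cdot\|(x_j^{(n+1)})\|_{\gamma_{s_{n+1}}}$, while the assumption on $T$ gives $(T(x_{j_1}^{(1)},\dots,x_{j_n}^{(n)})) \in \gamma_s(F;\mathbb{N}^n)$. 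Multiple regularity of $(\gamma_s(\cdot;\mathbb{N}^n))_n$ with $\gamma_{s_{n+1}}$ then combines them into an $(n+1)$-sequence in $\gamma_s(F;\mathbb{N}^{n+1})$ with the product norm estimate, so $\varphi \otimes T \in \mathcal{L}^{m,n+1}_{\gamma_{s,s_1}}$. The polynomial version $\varphi \cdot P \in \mathcal{P}^{m,n+1}$ is obtained by writing $(\varphi \cdot P)^{\vee}$ as the symmetric average of the $(n+1)$ permuted operators $(x_1,\dots,x_{n+1}) \mapsto \varphi(x_i)\,\check{P}(x_1,\dots,\widehat{x_i},\dots,x_{n+1})$, each of which belongs to $\mathcal{L}^{m,n+1}$ by the multilinear case just proved together with the symmetry of $\gamma_s(\cdot;\mathbb{N}^{n+1})$.

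I expect the main obstacle to be largely bookkeeping: matching the precise list of axioms in \cite{13}, tracking norm inequalities through the polarization correspondence $P \leftrightarrow \check{P}$, and verifying that the symmetry hypothesis is invoked exactly at the points where coordinates are permuted. The structural content is that down regularity and multiple regularity were tailor-made to handle the two directions ``drop a coordinate'' and ``add a coordinate'', so once the axioms are laid out each verification reduces to a single application of one of these two properties together with linear stability.
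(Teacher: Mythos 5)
Your proposal follows essentially the same route as the paper's proof: (CH1)/(CH2) via the auxiliary sequence supported at $j=1$ plus down regularity and Lemma \ref{LLL4.4.}, (CH3)/(CH4) via linear stability plus multiple regularity and the symmetric-average decomposition of $(\varphi P)^{\vee}$, with the remaining axioms built into Definition \ref{DDD4.1.}. The only cosmetic differences are that you make explicit the norm computation $\|(x_j^{(k)})_j\|_{\gamma_{s_k}(E_k)}=\|a\|$ and invoke the symmetry condition to treat an arbitrary slot $k$, where the paper treats $k=1$ and declares the general case analogous.
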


\begin{proof}

To prove (CH1), we will do only the case $i=1$. The general case is analogous. Let $T \in \mathcal{L}_{\gamma_{s, s_1,\dots,s_{n+1}}}^{m, n+1}(E_1,\dots, E_{n+1}; F)$ and $a_1 \in E_{1}$. 

Consider the sequence $\left(x_{j}^{(1)} \right)_{j=1}^{\infty}$, such that, $x_1^{(1)}=a_1$ and $x_{j}^{(1)}=0$ for $j \neq 1$. It is easy to see that, $\left(x_{j}^{(1)} \right)_{j=1}^{\infty} \in \gamma_{s_1}(E_1)$. Let $\left(x_{j}^{(i)} \right)_{j=1}^{\infty} \in \gamma_{s_i}(E_i)$, $i=2,\dots, n+1$. Thus $$\left(T\left(x_{j_1}^{(1)},\dots, x_{j_{n+1}}^{(n+1)} \right) \right)_{j_1,\dots, j_{n+1}=1}^{\infty} \in \gamma_s\left(F; \mathbb{N}^{n+1}\right).$$ Assuming $j_1=1$, since $\left(\gamma_n\left(\cdot; \mathbb{N}^{n} \right)\right)_{n=1}^{\infty}$ is down regular and the sequence class $\gamma_{s_1}$ is linearly stable, we have
\begin{equation*}
\left(T_{a_1}\left(x_{j_2}^{(2)},\dots, x_{j_{n+1}}^{(n+1)} \right) \right)_{j_2,\dots, j_{n+1}=1}^{\infty} = \left(T\left(a_1, x_{j_2}^{(2)},\dots, x_{j_{n+1}}^{(n+1)} \right) \right)_{j_2,\dots, j_{n+1}=1}^{\infty} \in \gamma_s\left(F; \mathbb{N}^{n}\right)
\end{equation*}
and
\begin{align*}
\left\|\left(T_{a_1}\left(x_{j_2}^{(2)},\dots, x_{j_{n+1}}^{(n+1)} \right) \right)_{j_2,\dots, j_{n+1}=1}^{\infty} \right\|_{\gamma_s\left(F; \mathbb{N}^{n}\right)} 
&\le \left\| \left(T\left(x_{j_1}^{(1)},\dots, x_{j_{n+1}}^{(n+1)} \right) \right)_{j_1,\dots, j_{n+1}=1}^{\infty}\right\|_{\gamma_s\left(F; \mathbb{N}^{n+1}\right)}\\
&\le \|T\|_{\mathcal{L}_{\gamma_{s, s_1,\dots,s_{n+1}}}^{m, n+1}}\|a_1\| \prod_{i=2}^{n+1}\left\|\left(x_{j}^{(i)} \right)_{j=1}^{\infty} \right\|_{\gamma_{s_1}(E_i)}.
\end{align*}

Therefore, $T_{a_1} \in \mathcal{L}_{\gamma_{s, s_2,\dots,s_{n+1}}}^{m, n}(E_2,\dots, E_{n+1}; F)$ and $\|T_{a_1}\|_{\mathcal{L}_{\gamma_{s, s_2,\dots,s_{n+1}}}^{m, n}} \le \|T\|_{\mathcal{L}_{\gamma_{s, s_1,\dots,s_{n+1}}}^{m, n+1}}\|a_1\|.$

Now we will check (CH3). Let $T \in \mathcal{L}_{\gamma_{s, s_1,\dots,s_{n}}}^{m, n}(E_1,\dots, E_{n}; F)$ and $\gamma \in E_{n+1}'$. Let $\left(x_{j}^{(i)} \right)_{j=1}^{\infty} \in \gamma_{s_i}(E_i)$, $i=1,\dots, n+1$. Since $\left(\gamma_n\left(\cdot; \mathbb{N}^{n} \right)\right)_{n=1}^{\infty}$ is multiple regular with $\gamma_{s_i}(\cdot)$ and the sequence classes are linearly stable, then $\left(\gamma T\left(x_{j_1}^{(1)},\dots, x_{j_{n+1}}^{(n+1)}\right) \right)_{j_1,\dots, j_{n+1}=1}^{\infty} = \left(\gamma\left(x_{j_{n+1}}^{(n+1)}\right) T\left(x_{j_1}^{(1)},\dots, x_{j_{n}}^{(n)}\right) \right)_{j_1,\dots, j_{n+1}=1}^{\infty} \in \gamma_s\left(F; \mathbb{N}^{n+1}\right)$ and
\begin{align*}
&\left\|\left(\gamma T\left(x_{j_1}^{(1)},\dots, x_{j_{n+1}}^{(n+1)}\right) \right)_{j_1,\dots, j_{n+1}=1}^{\infty}  \right\|_{\gamma_s\left(F; \mathbb{N}^{n+1}\right)}\\
&\le \left\|\left(\gamma\left(x_{j}^{(n+1)}\right)  \right)_{j=1}^{\infty} \right\|_{\gamma_{s_{n+1}}(\mathbb{K})}\left\| \left(T\left(x_{j_1}^{(1)},\dots, x_{j_{n}}^{(n)} \right) \right)_{j_1,\dots, j_n=1}^{\infty} \right\|_{\gamma_s\left(F; \mathbb{N}^n\right)}\\
&\le \|\gamma \| \|T\|_{\mathcal{L}_{\gamma_{s, s_1,\dots,s_{n}}}^{m, n}} \prod_{i=1}^{n+1}\left\|\left(x_{j}^{(i)} \right)_{j=1}^{\infty} \right\|_{\gamma_{s_i}(E_i)}.
\end{align*}
Therefore, $\gamma T \in \mathcal{L}_{\gamma_{s, s_1,\dots,s_{n+1}}}^{m, n+1}(E_1,\dots, E_{n+1}; F)$ and $\|\gamma T\|_{\mathcal{L}_{\gamma_{s, s_1,\dots,s_{n+1}}}^{m, n+1}} \le \|\gamma \| \|T\|_{\mathcal{L}_{\gamma_{s, s_1,\dots,s_{n}}}^{m, n}}.$

Now we will prove (CH2). Let $P \in \mathcal{P}_{\gamma_{s, s_i}}^{m, n+1}(^{n+1}E; F)$ and $a \in E$. To see that $P_{a} \in \mathcal{P}_{\gamma_{s, s_i}}^{m, n}(^nE; F)$ is enough to show that $(P_a)^{\vee}\in \mathcal{L}_{\gamma_{s, s_i}}^{m, n}(E^n; F)$. Since $P \in \mathcal{P}_{\gamma_{s, s_i}}^{n+1}(^{n+1}E; F)$, then $$\check{P} \in \mathcal{L}_{\gamma_{s, s_i}}^{m, n+1}(E^{n+1}; F),$$ thus, by (CH1), $$\check{P}_a \in \mathcal{L}_{\gamma_{s, s_i}}^{m, n}(E^{n}; F).$$ By the Lemma \ref{LLL4.4.}, we have that $$(P_a)^{\vee} = \check{P}_a \in \mathcal{L}_{\gamma_{s, s_i}}^{m, n}(E^{n}; F).$$ Like this, 
\begin{equation*}
\|P_{a}\|_{\mathcal{P}_{\gamma_{s, s_i}}^{m, n}} = \|(P_{a})^{\vee}\|_{\mathcal{L}_{\gamma_{s, s_i}}^{m, n}} = \|\check{P}_a\|_{\mathcal{L}_{\gamma_{s, s_i}}^{m, n}} \le \|\check{P}\|_{\mathcal{L}_{\gamma_{s, s_i}}^{m, n+1}}\|a\|.
\end{equation*}

Now we will prove (CH4). Let $P \in \mathcal{P}_{\gamma_{s, s_i}}^{m, n}(^nE; F)$ and $\varphi \in E'$. As done in (CH2), to see that $\varphi P \in \mathcal{P}_{\gamma_{s, s_i}}^{m, n+1}(^{n+1}E; F)$ is enough to show that $(\varphi P)^{\vee} \in \mathcal{L}_{\gamma_{s, s_i}}^{m, n+1}(E^{n+1}; F)$. Note that

\begin{equation*}
(\varphi P)^{\vee}\left(x_{j_1}^{(1)},\dots, x_{j_{n+1}}^{(n+1)} \right) = \displaystyle\frac{\varphi\left(x_{j_1}^{(1)}\right)\check{P}\left(x_{j_2}^{(2)} , \dots, x_{j_{n+1}}^{(n+1)}\right) + \cdots + \varphi\left(x_{j_{n+1}}^{(n+1)}\right)\check{P}\left(x_{j_1}^{(1)} , \dots, x_{j_n}^{(n)}\right)}{(n+1)!}.
\end{equation*}
Since $P \in \mathcal{P}_{\gamma_{s, s_i}}^{m, n}(^nE; F)$, then $\check{P} \in \mathcal{L}_{\gamma_{s, s_i}}^{m, n}(E^n; F)$. Thus, for any $\left(x_j^{(k)} \right)_{j=1}^{\infty} \in \gamma_{s_i}(E)$, $k=1,\dots, n+1$

\begin{equation*}
\left(\check{P}\left(x_{j_2}^{(2)} , \dots, x_{j_{n+1}}^{(n+1)}\right)\right)_{j_2,\dots,j_{n+1}=1}^{\infty},\dots,\left(\check{P}\left(x_{j_1}^{(1)},\dots, x_{j_n}^{(n)} \right) \right)_{j_1,\dots, j_n=1}^{\infty} \in \gamma_s\left(F; \mathbb{N}^n\right).
\end{equation*}
So, $\left(\gamma_n\left(\cdot; \mathbb{N}^{n} \right)\right)_{n=1}^{\infty}$ is multiple regular with $\gamma_{s_i}(\cdot)$, then for any $\left(x_j^{(k)} \right)_{j=1}^{\infty} \in \gamma_{s_i}(E)$, $k=1,\dots, n+1$

\begin{equation*}
\left(\varphi\left(x_{j_k}^{(k)} \right) \check{P}\left(x_{j_1}^{(1)},\dots, x_{j_{k-1}}^{(k-1)}, x_{j_{k+1}}^{(k+1)},\dots, x_{j_{n+1}}^{(n+1)} \right) \right)_{j_1,\dots, j_{n+1}=1}^{\infty} \in \gamma_s\left(F; \mathbb{N}^{n+1}\right).
\end{equation*}
Therefore
\begin{align*}
&\left((\varphi P)^{\vee}\left(x_{j_1}^{(1)},\dots, x_{j_{n+1}}^{(n+1)} \right)\right)_{j_1,\dots, j_{n+1}=1}^{\infty}\\ 
&= \left(\displaystyle\frac{\varphi\left(x_{j_1}^{(1)}\right)\check{P}\left(x_{j_2}^{(2)} , \dots, x_{j_{n+1}}^{(n+1)}\right) + \cdots + \varphi\left(x_{j_{n+1}}^{(n+1)}\right)\check{P}\left(x_{j_1}^{(1)} , \dots, x_{j_n}^{(n)}\right)}{n+1}  \right)_{j_1,\dots, j_{n+1}=1}^{\infty} \in \gamma_s\left(F; \mathbb{N}^{n+1}\right).
\end{align*}
Like this, $(\varphi P)^{\vee} \in \mathcal{L}_{\gamma_{s, s_i}}^{m, n+1}(E^{n+1}; F)$. Note also that, by (CH3)
\begin{equation*}
\|\varphi P\|_{\mathcal{P}_{\gamma_{s, s_i}}^{m, n+1}} = \|\left(\varphi P \right)^{\vee}\|_{\mathcal{L}_{\gamma_{s, s_i}}^{m, n+1}} \le \|\varphi \check{P}\|_{\mathcal{L}_{\gamma_{s, s_i}}^{m, n+1}} \le \|\varphi\| \|\check{P}\|_{\mathcal{L}_{\gamma_{s, s_i}}^{m, n+1}} = \|\varphi\| \|P\|_{\mathcal{P}_{\gamma_{s, s_i}}^{m, n+1}}.
\end{equation*}

The condition (CH5) follows from Definition \ref{DDD4.1.}.

\end{proof}

For the definition of global holomorphy type, see, e.g., \cite{30}.

\begin{corollary}\label{T5.3.} Under the assumptions of Theorem \ref{ttt},
$ \left(\mathcal{P}_{\gamma_{s, s_i}}^{m,n}, \| \cdot\|_{\mathcal{P}_{\gamma_{s, s_i}}^{m,n}} \right)_{n=1}^\infty$ is a global holomorphy type.
\end{corollary}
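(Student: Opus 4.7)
The plan is to reduce the statement to verifying the defining conditions of a global holomorphy type as given in \cite{30}, and then establish them by iterating the coherence property (CH1) already furnished by Theorem \ref{ttt}. The corollary following Proposition \ref{ppp} already ensures that for each $n$ the pair $\left(\mathcal{P}_{\gamma_{s,s_i}}^{m,n}, \|\cdot\|_{\mathcal{P}_{\gamma_{s,s_i}}^{m,n}}\right)$ is a Banach ideal of $n$-homogeneous polynomials, so the only remaining point is the differential-closure condition: for all Banach spaces $E,F$, every $P \in \mathcal{P}_{\gamma_{s,s_i}}^{m,n}(^n E; F)$, every $a \in E$, and every $0 \le k \le n$, one must have $\hat{d}^k P(a) \in \mathcal{P}_{\gamma_{s,s_i}}^{m,k}(^k E; F)$ together with a norm estimate of the form
\[
\left\|\hat{d}^k P(a)\right\|_{\mathcal{P}_{\gamma_{s,s_i}}^{m,k}} \le \binom{n}{k}\,\sigma^{n}\, \|P\|_{\mathcal{P}_{\gamma_{s,s_i}}^{m,n}}\, \|a\|^{n-k}
\]
for some universal constant $\sigma\ge 1$. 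The argument sketched below will in fact give $\sigma=1$.

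The core step proceeds via the standard polynomial--multilinear dictionary (see \cite{B05,13}): the unique symmetric $k$-linear operator associated with $\hat{d}^k P(a)$ equals $\binom{n}{k}$ times $\check{P}(a^{n-k},\cdot)$, where $\check{P}(a^{n-k},\cdot)$ is the continuous $k$-linear operator obtained from $\check{P}$ by freezing $n-k$ of its entries equal to $a$. Therefore it is enough to show that $\check{P}(a^{n-k},\cdot) \in \mathcal{L}_{\gamma_{s,s_i}}^{m,k}(E^k;F)$ with
\[
\bigl\|\check{P}(a^{n-k},\cdot)\bigr\|_{\mathcal{L}_{\gamma_{s,s_i}}^{m,k}} \le \|\check{P}\|_{\mathcal{L}_{\gamma_{s,s_i}}^{m,n}}\,\|a\|^{n-k}.
\]
This is achieved by iterating property (CH1) from Theorem \ref{ttt}: since $\check{P} \in \mathcal{L}_{\gamma_{s,s_i}}^{m,n}(E^n;F)$, a single application of (CH1) gives $\check{P}_a \in \mathcal{L}_{\gamma_{s,s_i}}^{m,n-1}(E^{n-1};F)$ with norm at most $\|\check{P}\|_{\mathcal{L}_{\gamma_{s,s_i}}^{m,n}}\|a\|$, and applying (CH1) exactly $n-k$ times in succession delivers the claim. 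Transferring back to the polynomial side through Definition \ref{DDD4.1.}, we obtain the desired estimate for $\hat{d}^k P(a)$.

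The only delicate point is the combinatorial bookkeeping relating the factor $\binom{n}{k}$ from the differential formula to the polynomial norm, and matching the precise phrasing of the definition of global holomorphy type taken from \cite{30}. These are purely formal matters of the polynomial--multilinear correspondence; no additional hypotheses on the sequence classes $\gamma_{s_i}$ or on the $n$-sequence classes $\gamma_s(\cdot;\mathbb{N}^n)$ beyond those already in force in Theorem \ref{ttt} are needed, since all the analytic content is concentrated at the multilinear level through repeated use of (CH1).
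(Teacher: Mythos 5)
Your argument is correct and is essentially the intended one: the corollary is meant to follow from Theorem \ref{ttt} by iterating (CH1)/(CH2) with constant $1$ to control $\check P(a^{n-k},\cdot)$, hence $\hat d^k P(a)$, and then invoking the definition of global holomorphy type from \cite{30}, exactly as you do. One small correction to your bookkeeping: the symmetric $k$-linear operator associated to $\hat d^k P(a)$ is $\tfrac{n!}{(n-k)!}\,\check P(a^{n-k},\cdot)=\binom{n}{k}\,k!\,\check P(a^{n-k},\cdot)$, so the resulting estimate is $\bigl\|\tfrac{1}{k!}\hat d^k P(a)\bigr\|_{\mathcal{P}_{\gamma_{s,s_i}}^{m,k}}\le\binom{n}{k}\|P\|_{\mathcal{P}_{\gamma_{s,s_i}}^{m,n}}\|a\|^{n-k}$, which yields the global holomorphy type constant $\sigma=2$ (since $\binom{n}{k}\le 2^{n}$) rather than $\sigma=1$; this does not affect the validity of the conclusion.
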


\section{Applications}\label{Sec5}
In this section we show that some well studied classes of multilinear operators can be recovered as particular cases of our abstract approach, and we also introduce new classes of multilinear operators that arise from our abstract point of view. Examples \ref{EEEE.2.11.} and \ref{Exemple4.6} assure that the main results of this paper apply to all classes listed in this section.  
\subsection{Multiple $(p, q_1,\dots, q_n)$-summing operators}

The class of multiple  $(p, q_1,\dots, q_n)$-summing operators, denoted by $\mathcal{L}_{ms(p, q_1,\dots, q_n)}(E_1,\dots, E_n; F)$, has been largely studied, see, e.g., \cite{19,20, 21}. This class is recovered by abstract framework by choosing
\begin{equation*}
\gamma_{s_k}(\cdot) = \ell_{q_k}^w(\cdot) \text{ and } 		\gamma_{s}\left(\cdot; \mathbb{N}^n\right) = \ell_p\left(\cdot; \mathbb{N}^n\right),
\end{equation*}
 for $1 \le q \le p \le \infty$ and $k=1,\dots, n$. 

\subsection{Multiple Cohen strongly $p$-summing operators}

The class of multiple Cohen strongly $p$-summing operators, denoted by $\mathcal{L}_{mCoh, p}(E_1,\dots, E_m; F)$, was studied in \cite{38}. In our abstract approach, it is recovered by choosing
\begin{equation*}
\gamma_{s_k}(\cdot) = \ell_p(\cdot) \text{ and } \gamma_s\left(\cdot; \mathbb{N}^n \right) = \ell_p\left<\cdot; \mathbb{N}^n \right>,
\end{equation*}
 where $1 < p < \infty$ and $k=1,\dots, n$. 

\subsection{Multiple mixing $(s, q, p)$-summing operators}

The concepts of mixed $m(s, q)$-summing sequences and multiple mixing $(s, q, p)$-summing operators were studied in \cite{BPSS15, 24, 33}. Just reminding the main definition, for 
$1 \leq p_1,\dots, p_n \le q \le s < \infty$, a continuous multilinear operator $T : E_1\times \cdots \times E_n \rightarrow F$ is multiple  $(s, q, p_1,\dots, p_n)$-mixing summing if
\begin{equation*}
\left(T\left(x_{j_1}^{(1)},\dots, x_{j_n}^{(n)} \right) \right)_{j_1,\dots, j_n=1}^{\infty} \in \ell_{m(s, q)}\left(F; \mathbb{N}^n\right),
\end{equation*}
whenever $\left(x_j^{(i)} \right)_{j=1}^{\infty} \in \ell_{p_i}^w(E_i)$, $i=1,\dots, n$.
Note that, considering
\begin{equation*}
\gamma_s\left(\cdot; \mathbb{N}^n\right) = \ell_{m(s, q)}\left(\cdot; \mathbb{N}^n\right) \text{ and } \gamma_{s_k} = \ell_{p_i}^w,
\end{equation*}
for $i=1,\dots, n$, this class is a particular case of our general construction.

The next three subsections introduce new classes of multilinear operators which are particular cases of our  abstract framework, making clear that our results can also be applied to classes that had not been considered in the literature yet.

\subsection{Multiple strong $(s, q, p)$-mixing summing operators}


Let $1 \leq q \le s \le \infty$, $E_1,\dots, E_n, F$ be Banach spaces and $p \le q$. A continuous multilinear mapping $T : E_1\times \cdots \times E_n \rightarrow F$ is said to be multiple strong $(s, q, p)$-mixing summing if

\begin{equation*}
\left(T\left(x_{j_1}^{(1)},\dots, x_{j_n}^{(n)} \right) \right)_{j_1,\dots, j_n=1}^{\infty} \in \ell_p\left(F; \mathbb{N}^n\right)
\end{equation*}
wherever $\left(x_j^{(i)} \right)_{j=1}^{\infty} \in \ell_{m(s, q)}(E_i), i=1,\dots, n$.

 Choosing
\begin{equation*}
\gamma_s\left(\cdot; \mathbb{N}^n\right) = \ell_p\left(\cdot; \mathbb{N}^n\right) \text{ and } \gamma_{s_i} = \ell_{m(s, q)},
\end{equation*}
we  conclude that the class of all multiple $(s, q, p)$-mixing summing multilinear operators is a Banach multi-ideal for which the results of this paper apply. 

\subsection{Multiple strong mid $p$-summing operators}

Let $1 \le p < \infty$, $n \in \mathbb{N}$ and $E_1,\dots, E_n, F$ be Banach spaces. A continuous multilinear operator $T : E_1 \times \cdots \times E_n \longrightarrow F$ is said to be multiple strong mid $p$-summing if
\begin{equation*}
\left(T\left(x_{j_1}^{(1)},\dots,  x_{j_n}^{(n)} \right) \right)_{j_1,\dots, j_n=1}^{\infty} \in \ell_p\left(F; \mathbb{N}^n\right)
\end{equation*}
whenever $\left(x_j^{(i)} \right)_{j=1}^{\infty} \in \ell_p^{mid}(E_i)$, $i=1,\dots, n$.

Since the sequence classes $\ell_p$ and $\ell_p^{mid}$ are finitely determined and linearly stable (see \cite{BC17, 22}), the class of multiple strong mid $p$-summing multilinear operators is one more particular case of the classes studied in this paper.


\subsection{Multiple mid weakly $p$-summing operator}

Let $1 < p < \infty$, $n \in \mathbb{N}$ and $E_1,\dots, E_n, F$ be Banach spaces. A continuous multilinear application $T : E_1 \times \cdots \times E_n \longrightarrow F$ is said to be multiple weakly mid $p$-summing if
\begin{equation*}
\left(T\left(x_{j_1}^{(1)},\dots,  x_{j_n}^{(n)} \right) \right)_{j_1,\dots, j_n=1}^{\infty} \in \ell_p^{mid}\left(F; \mathbb{N}^n\right)
\end{equation*}
whenever $\left(x_j^{(i)} \right)_{j=1}^{\infty} \in \ell_p^{w}(E_i)$, $i=1,\dots, n$.

For the same reasons, the class of all multiple mid weakly $p$-summing multilinear operators is a particular instance of the classes studied in this paper as well. 




\end{document}